\theoremstyle{plain}
\newtheorem{theorem}{Theorem}[section]
\newtheorem{lemma}[theorem]{Lemma}
\newtheorem{question}[theorem]{Question}
\newtheorem{proposition}[theorem]{Proposition}
\theoremstyle{definition}
\newtheorem{assumption}[theorem]{Assumption}
\newtheorem{definition}[theorem]{Definition}
\newtheorem{remark}[theorem]{Remark}
\newtheorem{construction}[theorem]{Construction}
\begin{document}

\title{A Topologically Rigid Set of Quotients of the Davis Complex
}

\author{Yandi Wu
}


\maketitle

\begin{abstract}
A class of topological spaces is \textit{topologically rigid} if any two spaces with the same fundamental group are also homeomorphic. Topological rigidity, in addition to its intrinsic interest, has been useful for solving abstract commensurability questions. In this paper, we explore the topological rigidity of quotients of the Davis complex of certain right angled Coxeter groups by providing conditions on the defining graphs that obstruct topological rigidity. Furthermore, we explore why topological rigidity is hard to achieve for quotients of the Davis complex. Nonetheless, we conclude by introducing infinitely many infinite topologically rigid subclasses. 
\end{abstract}

\section{Introduction}
\label{intro}
Often, determining whether two topological objects are homeomorphic is a significantly harder problem than determining whether their fundamental groups are isomorphic. In some cases, however, if we impose enough conditions on the topological spaces we are studying, the weaker equivalence relation (isomorphism between fundamental groups) implies the stronger and often more useful equivalence relation (homeomorphism between the topological objects). We can often exploit the topological rigidity of such sets of spaces to derive useful results (recall that a collection of topological objects $\mathcal{X}$ is \textit{topologically rigid} if for any $X_1, X_2 \in \mathcal{X}$, if $\pi_1(X_1) \cong \pi_1(X_2)$, then $X_1$ and $X_2$ are homeomorphic). For example, to determine if two groups $G_1$ and $G_2$ are \textit{abstractly commensurable} (i.e. have isomorphic finite-index subgroups), we often construct two finite-sheeted homeomorphic covers of $X_1$ and $X_2$, where $\pi_1(X_i) = G_i$ for $i = 1, 2$. These homeomorphic covers can be hard to construct. However, if the finite-sheeted covers $\tilde{X_1}$ and $\tilde{X}_2$ belong to a topologically rigid class of spaces and $\pi_1(\tilde{X}_1) \cong \pi_1(\tilde{X_2})$, then we know $\tilde{X}_1$ and $\tilde{X}_2$ are homeomorphic.

There are several well-established examples of topologically rigid classes. For example, the set of closed orientable 2-manifolds is topologically rigid. The Poincare Conjecture implies the set of simply-connected, closed 3-manifolds is topologically rigid. In a series of papers (see \cite{LaFont1}, \cite{LaFont}, and \cite{LaFont2}), Lafont proves the set of of simple, thick $n$-dimensional hyperbolic P-manifolds, a subclass of piecewise CAT(-1) spaces, is topologically rigid for $n \geq 2$. In this paper, we consider certain \textit{orbicomplexes}, unions of collections of orbifolds identified along homeomorphic suborbifolds, associated with Right-Angled Coxeter Groups (RACGs), defined below. 

\begin{definition}[Right-Angled Coxeter Group] Given a finite simplicial graph $\Gamma$ with edge set $E$ and vertex set $V$, the \emph{Right-Angled Coxeter Group (RACG)} $W_{\Gamma}$ with defining graph $\Gamma$ is the group with presentation $\langle v_i \in V : v_i^2 = 1, [v_i, v_j] = 1 \text{ if } [v_i, v_j] \in E\rangle$.
\end{definition} 

A RACG $W_{\Gamma}$ acts properly discontinuously by isometeries on a space called the \textit{Davis Complex} $\Sigma_{\Gamma}$. The quotient $\mathcal{D}_{\Gamma} = \Sigma_{\Gamma}/W_{\Gamma}$, which we call a \textit{Davis orbicomplex}, is one of the aforementioned orbicomplexes and comes equipped with cell stabilizer data defined by the action of $W_{\Gamma}$ on $\Sigma_{\Gamma}$. To clarify, recall that if an amalgamated free product or HNN extension $G$ acts on a Basse Serre tree $T$, the resulting quotient $T/G$ is a graph of groups whose vertices and edges are labelled by subgroups of $G$ isomorphic to vertex and edge stabilizers of $T$. Similarly, each edge and vertex of a Davis orbicomplex $\mathcal{D}_{\Gamma}$ may be labeled by a subgroup of $W_{\Gamma}$ that stabilizes a lift of the edge or vertex in $\Sigma_{\Gamma}$ (in this paper, we do not specify such labels as they are not crucial for our proofs). For further background on the Davis complex and Coxeter groups, refer to \cite{Davis}. The Davis orbicomplex has been studied extensively by Stark, who poses the following question in \cite{Stark}, which we will give a partial answer to in this paper:

\begin{question}
\label{mainq}
For which set $\mathcal{W}$ of Coxeter groups is the set of Davis orbicomplexes $\mathcal{D}_{\Gamma}$ for groups
in $\mathcal{W}$ together with their finite-sheeted covers topologically rigid?
\end{question}

Despite the simplicity of the problem statement, the answer to Question \ref{mainq} is very nuanced. In this paper, we focus our attention on RACGs that are one-ended ($\Gamma$ has no separating edges or vertices and is connected) and hyperbolic ($\Gamma$ is square-free, or has no cycles of length four). One example of a class of defining graphs that gives rise to $W_{\Gamma}$ satisfying these conditions is a subclass of \textit{generalized $\Theta$ graphs}, defined as follows:

\begin{definition}[Generalized $\Theta$-graph]
\label{theta}
For $k \geq 1$, $0 \leq n_1 \leq ... \leq n_k$, let $\Theta = \Theta(n_1, n_2, ..., n_k)$ be the graph with two vertices $a$ and $b$, each of valence $k$, and $k$ edges $e_1, e_2, ..., e_k$ connecting them, which we will call the branches of $\Theta$. Furthermore, for $1 \leq i \leq k$, $e_i$ is subdivided into $n_i + 1$ edges by inserting $n_i$ new vertices. 
\end{definition}

For the purposes of this paper, we will require that $n_i > 0$ for all $1 \leq i \leq k$ and $n_2 > 1$ in order to ensure $W_{\Gamma}$ is hyperbolic. 

Associated to each generalized $\Theta$-graph is an Euler characteristic vector, which captures the Euler characteristics of the orbifolds in the Davis  orbicomplex $\mathcal{D}_{\Gamma}$. The Euler characteristic vector is often used to classify Davis orbicomplexes; in \cite{DST}, the Euler characteristic vector is used to list abstract commensurability criteria. In this paper, we will use Euler characteristic vectors to list criteria for topological rigidity.  

\begin{definition}[Euler characteristic vectors of generalized $\Theta$-graphs]
\label{ECV}
Let $\Theta = \Theta(n_1, n_2, ..., n_k)$ be a generalized $\Theta$ graph. Then the \emph{Euler characteristic vector} of $\Theta$ is the vector $v = (x_1, x_2, ..., x_n)$, where $x_i = \frac{1 - n_i}{4}$. Two Euler characteristic vectors $v_1$ and $v_2$ are said to be \emph{commensurable} if there exist $K, L \in \mathbb{Z}_{\neq 0}$ such that $Kv = Lw$.  
\end{definition}

Dani, Stark, and Thomas show in Theorem 5.2 of \cite{DST} that finite covers of Davis orbicomplexes with $\Gamma = \Theta(n_1, n_2, ..., n_k)$ are topologically rigid. In this paper, we focus on cycles of generalized $\Theta$ graphs introduced in \cite{DST}, which consist of generalized $\Theta$ graphs identified along their essential vertices.

\begin{definition}[Cycle of generalized $\Theta$-graphs]
\label{cycle} 
Let $N \geq 3$ and let $b_1, b_2, ..., b_N$ be positive integers so that for each $i$, $1 \leq i \leq N$, at most one of $b_i$ and $b_{i + 1}$ where $i$ is taken $\mod N$ is equal to 1. Let $\Theta_i$ be a generalized $\Theta$ graph with $b_i$ edges between two vertices $a_i$ and $c_i$. We can construct a cycle of $N$ generalized $\Theta$-graphs $\Gamma$ by identifying $c_i$ with $a_{i + 1}$. 
\end{definition}

We call the vertex of a cycle of generalized $\Theta$ graphs with valence greater than two an \textit{essential vertex}. For the rest of the paper, we will use $\{v_i\}_{i = 1}^{N}$ to denote the set of essential vertices of all the graphs involved. The indices of all $v_i$'s will also taken mod $N$, where $N$ is the number of essential vertices (or equivalently generalized $\Theta$ graphs) in a cycle of generalized $\Theta$-graphs $\Gamma$. 

Let $\Gamma$ be a cycle of generalized $\Theta$ graphs with Davis complex $\Sigma_{\Gamma}$, and $G$ a finite index, torsion-free subgroup of $W_{\Gamma}$. Stark proves in \cite{Stark} that the set of quotients $\Sigma_{\Gamma}/G$, which correspond to finite-sheeted covers of the Davis orbicomplexes $\mathcal{D}_{\Gamma}$, is not topologically rigid by constructing $X_1 = \Sigma_{\Gamma}/G_1$ and $X_2 = \Sigma_{\Gamma}/G_2$ that are homotopic but not homeomorphic. Theorem \ref{main1} in Section 2 generalizes the construction from \cite{Stark} to create a class of orbicomplexes where topological rigidity fails. Our construction of homotopic but not homeomorphic covers relies on the fact that one set of orbifolds in the orbicomplex is a finite cover of another set of orbifolds. 

\begin{definition}
\label{C}
Suppose $\Gamma$ is a cycle of generalized $\Theta$ graphs with essential vertices $\{v_i\}_{i = 1}^{N}$, and there exist two essential vertices $v_j$, $v_k$ such that the generalized $\Theta$ graphs $\Theta_j$ and $\Theta_k$ between $v_j$ and $v_{j + 1} \mod N$ and $v_k$ and $v_{k + 1} \mod N$ (where $k \neq j$) respectively have commensurable Euler characteristic vectors $u$ and $w$ ($Ku = Lw$ for some $K, L \in \mathbb{Z}_{\neq 0}$). Then we say $\Gamma$ is \textit{repetitive}. If $K$ or $L = 1$, then we say $\Gamma$ is \textit{strongly repetitive}. 
\end{definition} 

\begin{theorem}
 \label{main1} 
 Suppose a class of finite-sheeted covers of Davis orbicomplexes $\mathcal{X}$ contains all the finite-sheeted covers of some Davis orbicomplex $\mathcal{D}_{\Gamma}$ where $\Gamma$ is strongly repetitive. Then $\mathcal{X}$ is not topologically rigid. 
 \end{theorem}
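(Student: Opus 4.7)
The strategy is to construct two finite-sheeted covers $X_1, X_2 \in \mathcal{X}$ of $\mathcal{D}_\Gamma$ that have isomorphic fundamental groups but are not homeomorphic, extending Stark's construction in \cite{Stark}. Let $\Theta_i$ and $\Theta_k$ be the two $\Theta$-subgraphs of $\Gamma$ witnessing strong repetitiveness, with Euler characteristic vectors $u$ and $w$ satisfying $Ku = w$ (so $L = 1$, after relabelling).

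The first step would be to convert the algebraic relation $Ku = w$ into a geometric covering statement between the orbifold pieces $\mathcal{O}_i := \mathcal{D}_{\Theta_i}$ and $\mathcal{O}_k := \mathcal{D}_{\Theta_k}$. Each entry of the Euler characteristic vector records the Euler characteristic of one of the ``pages'' glued along the boundary circle of that piece, so $w = Ku$ forces each page of $\mathcal{O}_k$ to have the same Euler characteristic as $K$ copies of the corresponding page of $\mathcal{O}_i$. Using orbifold covering theory (as in \cite{DST, Stark}), I would promote this numerical match to a genuine $K$-fold orbifold covering map $\mathcal{O}_k \to \mathcal{O}_i$ (or at least to a common cover of both with matching boundary behaviour) that restricts to the identity on the boundary circles glued to the neighbouring pieces of $\Gamma$.

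Next, I would use this covering map to build $X_1$ and $X_2$ by performing different local covering substitutions at the $\Theta_i$- and $\Theta_k$-slots of $\mathcal{D}_\Gamma$. In $X_1$, I take a $K$-fold cover over the $\Theta_i$-slot (suitably unwrapping the adjacent cyclic gluings) while keeping a single unramified copy of $\mathcal{O}_k$ over the $\Theta_k$-slot. In $X_2$, I reverse the roles: a single copy over the $\Theta_i$-slot and a $K$-fold cover over the $\Theta_k$-slot obtained by pulling back through the orbifold cover from the previous step. Both produce genuine finite covers of $\mathcal{D}_\Gamma$ of the same total degree, and a graph-of-groups computation along the cyclic scaffold of $\Gamma$ shows $\pi_1(X_1) \cong \pi_1(X_2)$, because in each case we are amalgamating the same collection of page-subgroups along the same boundary-circle subgroups in the same cyclic pattern.

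The main obstacle is showing $X_1$ and $X_2$ are not homeomorphic. I would rely on the canonical decomposition of such a cover into maximal orbifold pieces, which is detectable from the local structure of the singular locus inherited from the Davis CAT(0) geometry on $\Sigma_\Gamma$, and therefore preserved by any homeomorphism. Once decompositions are matched, $X_1$ and $X_2$ are distinguished by the cyclic sequence of piece types appearing in each cover: when $K \geq 2$, $\mathcal{O}_i$ and $\mathcal{O}_k$ are genuinely different orbifolds (their Euler characteristic vectors differ) so the substitution asymmetry between $X_1$ and $X_2$ is visible as a combinatorial invariant, and in the degenerate case $K = 1$ I would instead arrange the cyclic positions of the two pieces to differ. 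Proving that the orbifold decomposition is a homeomorphism invariant, and not merely an orbifold-structure invariant, is the technical core of the argument and is where I expect to spend most of the effort.
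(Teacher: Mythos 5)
Your high-level strategy (two finite covers of the same $\mathcal{D}_\Gamma$ that trade the roles of the two commensurable $\Theta$-pieces, homotopy equivalent but with different singular structure) is the same as the paper's, but several steps as you describe them would fail. First, for $K \geq 2$ there is no degree-$K$ orbifold cover $\mathcal{O}_k \to \mathcal{O}_i$ that ``restricts to the identity on the boundary circles'': a degree-$K$ cover has total degree $K$ over the non-reflection edge, so the boundary necessarily unwraps/folds with degree $K$. This boundary behaviour is exactly the crux of the problem, and the paper handles it through Lemma \ref{jh} (a degree-$d$ cover of a piece with $r$ reflection edges is a jester hat with $\tfrac{d}{2}(r-3)+2$ cone points) together with a careful construction of singular-set covers containing cycles of prescribed lengths and labels, of global degree $2\prod p_d^{u_d+1}$. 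Second, your ``local covering substitution'' is incoherent as stated: over a connected base a finite cover has constant degree, so you cannot take a $K$-fold cover over the $\Theta_i$-slot and a single copy over the $\Theta_k$-slot. The honest version is to let the preimage of each slot break into pieces of varying multiplicities summing to the global degree; but then the naive swap does not give matching collections of pieces (a degree-$K$ cover of the $\Theta_k$-piece has $K(r_{k,b}-3)+3$ reflection edges, which is not $r_{1,b}$), so $\pi_1(X_1)\cong\pi_1(X_2)$ would fail. The paper's bookkeeping distributes the local degrees so that the identity $\tfrac{dK}{2}(r_{1,b}-3)+2=\tfrac{d}{2}(r_{k,b}-3)+2$, which is the precise consequence of strong repetitiveness, makes the two multisets of jester hats coincide.

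Third, your argument is internally inconsistent at the key point: to get $\pi_1(X_1)\cong\pi_1(X_2)$ you claim the pieces are amalgamated ``in the same cyclic pattern,'' while to get non-homeomorphism you claim the ``cyclic sequence of piece types'' differs. These cannot both hold, and in any case the covers are not cyclic sequences of pieces: their singular sets are graphs covering the singular star of $\mathcal{D}_\Gamma$, and jester hats are glued along cycles in these graphs. The paper resolves the tension by showing that both singular sets are homotopy equivalent to the same holed sphere (take regular neighborhoods), so the two covers are homotopy equivalent because the same multiset of jester hats is attached along boundary circles, while they are not homeomorphic because the singular graphs themselves differ (different cut-pair structure); the invariance of the singular set under homeomorphism is immediate from it being the locus of non-surface points, not the ``technical core'' you anticipate. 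Without the boundary-degree analysis, the degree bookkeeping for general $K$, and a correct mechanism separating homotopy type from homeomorphism type, the proposal does not yet yield the theorem.
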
 
 
 It is not known whether Theorem \ref{main1} is true if we only assume $\Gamma$ is repetitive.

Theorem \ref{main1} as well as Stark's proof in \cite{Stark} rely on constructions of finite-sheeted covers of the same Davis orbicomplex $\mathcal{D}_{\Gamma}$. One can also prove, however, that two finite-sheeted covers of nonhomeomorphic Davis orbicomplexes can also violate topological rigidity.
 
\begin{definition}[Permuted pairs]
\label{Cprime} 
Two cycles of generalized $\Theta$ graphs $\Gamma_1$ and $\Gamma_2$ form a \emph{permuted pair} if they are obtained from identifying the essential vertices of the same set of generalized $\Theta$ graphs. Equivalently, the set of Euler characteristic vectors of $\Gamma_1$ is some permutation of the set of Euler characteristic vectors of $\Gamma_2$.
\end{definition} 

\begin{remark} Note that if we use the definition above, it is possible for a permuted pair $\Gamma_1$ and $\Gamma_2$ to be isomorphic. For example, if $\Gamma_1$ and $\Gamma_2$ each consist of three generalized $\Theta$ graphs glued together, they are isomorphic (see the proof of Lemma \ref{samethetas} for details). We do not consider such pairs in Theorem \ref{main2}, stated below.
\end{remark}

\begin{figure*}[h]
    \centering
    \includegraphics[width=0.6\textwidth]{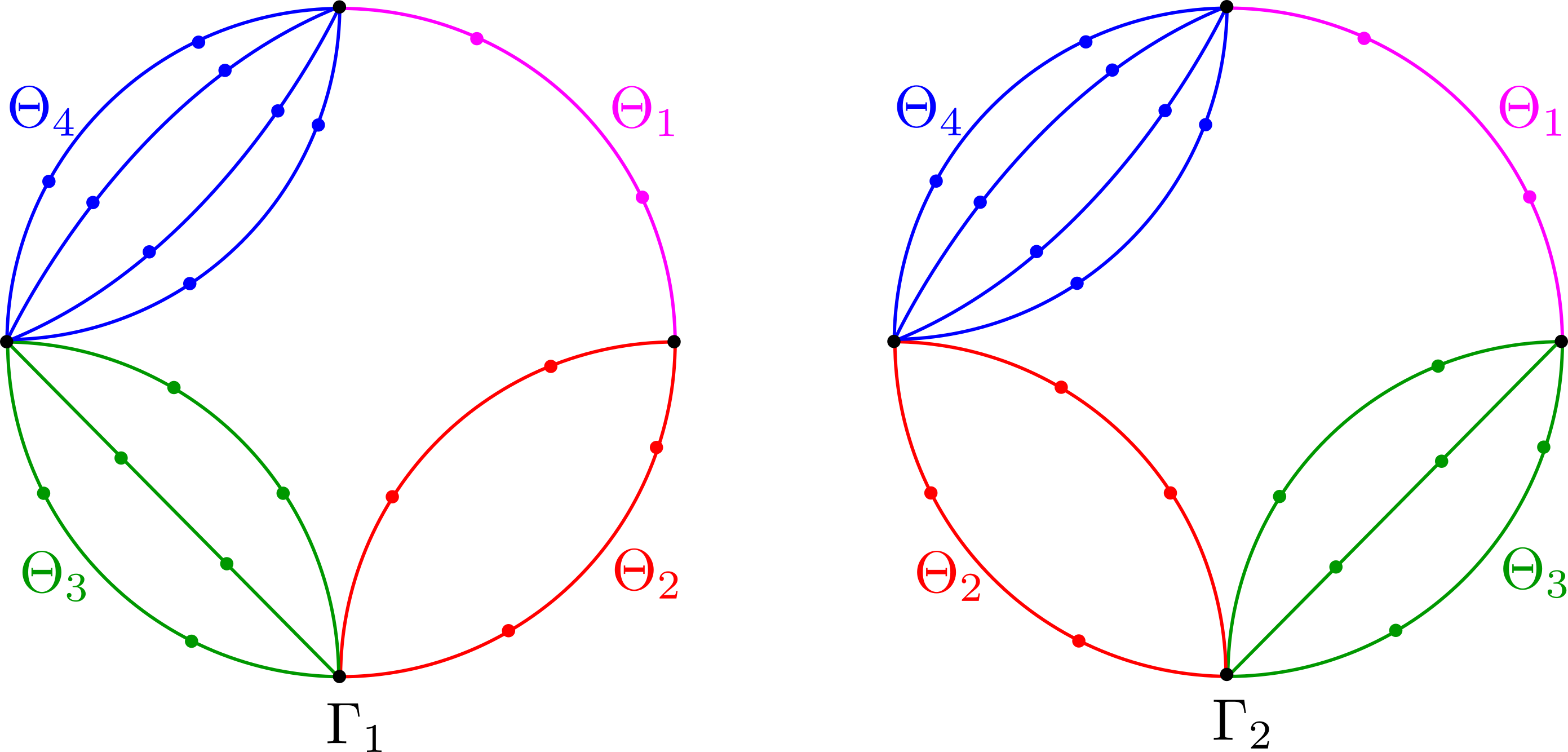}
    \caption{An example of a permuted pair. Note that $\Gamma_1$ and $\Gamma_2$ both consist of $\Theta_i$ (where $1 \leq i \leq 4$) glued along essential vertices.}
    \label{fig:pp}
\end{figure*}
 
 \begin{theorem}
\label{main2} Suppose a class of finite-sheeted covers of Davis orbicomplexes $\mathcal{X}'$ contains all finite sheeted covers of two Davis orbicomplexes $\mathcal{D}_{\Gamma_1}$ and $\mathcal{D}_{\Gamma_2}$, where $\Gamma_1$ and $\Gamma_2$ form a permuted pair and $\Gamma_1$ and $\Gamma_2$ are not isomorphic. Then $\mathcal{X}'$ is not topologically rigid. 
\end{theorem}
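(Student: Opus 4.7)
The plan is to exhibit, for each $j \in \{1,2\}$, a finite-sheeted torsion-free cover $\tilde X_j$ of $\mathcal{D}_{\Gamma_j}$ whose total space is a cyclic gluing of a common multiset of surface pieces along single points, with the cyclic order dictated by that of $\Gamma_j$. Because $\Gamma_1$ and $\Gamma_2$ form a permuted pair, $\tilde X_1$ and $\tilde X_2$ will be assembled from the same multiset of surfaces in (generically) inequivalent cyclic orders; I will then argue that their fundamental groups agree by symmetry of the resulting free product, while the cyclic arrangement itself is a homeomorphism invariant that separates them.

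First, for each $\Theta$-piece $\Theta_i$ in the shared multiset, the associated orbifold $O_i$ inside $\mathcal{D}_{\Gamma_j}$ depends only on the combinatorics of $\Theta_i$ and not on its position in the cycle. Using the virtual torsion-freeness of $W_{\Theta_i}$, I would fix a common finite manifold cover $\hat O_i$ of $O_i$, with the index chosen divisible enough that at each of the two non-valence-two vertices of $\Theta_i$ the preimage in $\hat O_i$ is a prescribed common point set. I would then select a torsion-free finite-index subgroup of each $W_{\Gamma_j}$ whose restriction to each $W_{\Theta_i}$ gives rise exactly to this local cover, so that the global cover $\tilde X_j \to \mathcal{D}_{\Gamma_j}$ decomposes as a cyclic gluing of copies of the $\hat O_i$'s in the order specified by $\Gamma_j$, with gluings occurring at the now regular preimages of the non-valence-two vertices.

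Because each gluing in $\tilde X_j$ occurs at a finite set of points, Seifert--Van Kampen applied to this graph-of-spaces decomposition presents $\pi_1(\tilde X_j)$ as a free product of the $\pi_1(\hat O_i)$'s (one factor per piece traversed) together with an additional $\mathbb{Z}$ factor recording the loop around the cycle. Since the free product is invariant under permutation of its factors, $\pi_1(\tilde X_1) \cong \pi_1(\tilde X_2)$. For non-homeomorphism, I would observe that the gluing points of $\tilde X_j$ are intrinsically detected as its non-manifold points (where the local link fails to be a sphere), so that any homeomorphism $\tilde X_1 \to \tilde X_2$ must map pieces to pieces and therefore induce an isomorphism of the cyclic graphs whose vertices are labeled by the homeomorphism type of the corresponding surface piece. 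Provided the $\Theta_i$ in the shared multiset have pairwise distinct Euler characteristic vectors, the two cyclic labelings coming from $\Gamma_1$ and $\Gamma_2$ are inequivalent under the dihedral symmetries of the cycle, which yields the desired non-homeomorphism.

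The step I expect to be most delicate is the global cover construction: one must realize a chosen collection of local orbifold covers $\{\hat O_i\}$ as the piece decomposition of a single finite-sheeted cover of $\mathcal{D}_{\Gamma_j}$, which amounts to a Stallings-style argument and requires careful bookkeeping of the gluing data at the non-valence-two vertices, leaning on the cycle-of-$\Theta$-graph machinery of \cite{DST} and \cite{Stark}. A secondary subtlety is the non-homeomorphism step when the shared multiset has repeated entries: there one must verify that the cyclic sequences arising from $\Gamma_1$ and $\Gamma_2$ remain dihedrally inequivalent, which effectively constrains the permuted pairs for which the theorem is nontrivial.
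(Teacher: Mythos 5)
Your construction cannot be carried out, and the failure is structural rather than a matter of bookkeeping. In the Davis orbicomplex of a cycle of generalized $\Theta$-graphs the branch orbifolds are attached along their non-reflection \emph{edges} to the edges of a star-shaped singular set (Construction \ref{orbicomplex}), and adjacent $\Theta$-pieces share entire edges of that star; consequently, in every finite-sheeted cover the pieces (jester hats, or the surfaces covering them) are glued along their boundary \emph{circles} to cycles of a graph covering the singular set. There is no finite cover in which the pieces meet in a finite set of points, so the proposed "cyclic gluing along single points" does not exist, Seifert--Van Kampen gives a graph of groups with infinite (free or cyclic) edge groups rather than a free product, and the claim that $\pi_1(\widetilde{X}_j)$ is a permutation-invariant free product of the piece groups together with one $\mathbb{Z}$ factor collapses. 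With one-dimensional gluing loci the homotopy type genuinely depends on which boundary circles are identified with which cycles of the singular graph, and arranging this match is exactly the content of the paper's proof: in Lemma \ref{samethetas} one builds explicit four-sheeted covers of $\mathcal{D}_{\Gamma_1}$ and $\mathcal{D}_{\Gamma_2}$ whose singular sets are two labeled graphs on four non-valence-two vertices, with the labels chosen via the permutation so that the two $4$-cycles and the $N-2$ two-cycles of each graph carry identical sets of jester hats (using Lemma \ref{jh} to identify the jester hats covering each branch orbifold); both covers are then homotopy equivalent to the same $(2N-2)$-holed sphere with the same jester hats attached along corresponding boundary components, while their labeled singular sets are non-homeomorphic graphs, so the covers are homotopic but not homeomorphic.

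A second gap is that your non-homeomorphism step imports hypotheses that the theorem does not grant: you need the $\Theta_i$ in the shared multiset to have pairwise distinct Euler characteristic vectors and the two cyclic orders to be dihedrally inequivalent, and you yourself flag that repeated entries are unresolved. "Permuted pair" (Definition \ref{Cprime}) imposes no such distinctness, so even if the point-gluing picture were available your argument would establish the conclusion only for special permuted pairs, not for the class in the statement; the paper's construction distinguishes its two covers by their singular-set graphs and needs no genericity of the multiset. Finally, note that $\mathcal{X}'$ consists of finite-sheeted covers of the Davis orbicomplexes, which include orbicomplex (non-torsion-free) covers; the witnesses in the paper are degree-four orbicomplex covers, so insisting on torsion-free covers with prescribed local behavior over each $W_{\Theta_i}$ is an unnecessary complication that makes the required gluing control strictly harder, not easier.
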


In the proof of Theorem \ref{main2}, we find two homotopic finite-sheeted covers of $\mathcal{D}_{\Gamma_1}$ and $\mathcal{D}_{\Gamma_2}$ that are not homemomorphic. As a side note, this means that $W_{\Gamma_1}$ and $W_{\Gamma_2}$ are commensurable, so having two defining graphs that form a permuted pair is a sufficient condition for commensurability. Recall that in Theorem 1.12 of \cite{DST}, Dani, Stark, and Thomas provide two necessary and sufficient conditions for commensurability of RACGs with defining graphs that are cycles of generalized $\Theta$ graphs. 

In \cite{Stark}, Stark constructs $X_1$ and $X_2$, two homotopic finite covers of a Davis orbicomplex $\mathcal{D}_{\Gamma}$ with non-homeomorphic \textit{singular sets} (e.g. sets along which the orbifolds are identified). In her example, $\Gamma$ is a cycle of generalized $\Theta$ graphs, proving that the set of finite-sheeted covers of Davis complexes with defining graphs that are cycles of generalized $\Theta$ graphs is not topologically rigid. In light of these results, in Section 3 of \cite{DST}, Dani, Stark, and Thomas construct a different set of orbicomplexes that is topologically rigid, which they use to prove abstract commensurability results. Nevertheless, in section 4 (see Theorem \ref{main}), we are able to find a topologically rigid subclass of finite-sheeted covers of Davis orbicomplexes $\mathcal{D}_{\Gamma}$ where $\Gamma$ is a cycle of generalized $\Theta$ graphs. The subclass also takes Theorems \ref{main1} and \ref{main2} into account to exclude finite-sheeted covers of $\mathcal{D}_{\Gamma}$ that violate topological rigidity. Although the exact statement of the theorem is fairly technical, we state a simplified version below.

\begin{theorem}
\label{main} 
There exists an infinite class $\mathcal{C}$ of Davis orbicomplexes such that for any $D_{\Gamma} \in \mathcal{C}$, an infinite collection of finite-sheeted covers of $D_{\Gamma}$ form a topologically rigid set. 
\end{theorem}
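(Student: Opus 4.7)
The plan is to define a class $\mathcal{C}$ that avoids the obstructions of Theorems \ref{main1} and \ref{main2}, and then to prove rigidity by decomposing finite covers of $D_\Gamma$ into a graph of orbifold pieces whose vertex pieces are rigid by the Dani--Stark--Thomas $\Theta$-graph rigidity result (Theorem 5.2 of \cite{DST}) and whose gluings are forced by the fundamental group isomorphism. To define $\mathcal{C}$, I would restrict to $D_\Gamma$ where $\Gamma$ is a cycle of generalized $\Theta$-graphs that is \emph{not} strongly repetitive, so that no pair of $\Theta_i, \Theta_k$ has Euler characteristic vectors related by $u = Lw$ or $Ku = w$ (directly blocking the Theorem \ref{main1} construction). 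The ``certain finite-sheeted covers'' would then be those whose induced orbifold decomposition has a normalized combinatorial form, chosen to preclude the permuted-pair phenomenon from reappearing internally among the covers of a single $D_\Gamma$.

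Given two such covers $X_1, X_2$ of some $D_\Gamma \in \mathcal{C}$ with $\pi_1(X_1) \cong \pi_1(X_2)$, I would first identify the singular set $\Sigma_i \subset X_i$ topologically -- for instance, as the set of points at which $X_i$ fails to be a $2$-manifold. This set is preserved by any homotopy equivalence, and each component of $X_i \setminus \Sigma_i$ is an open $2$-orbifold covering some $\Theta_j$-piece of $D_\Gamma$. Second, I would translate the $\pi_1$-isomorphism into an isomorphism of the associated graph-of-groups / Bass--Serre decompositions (vertex groups being $\pi_1$ of the $2$-orbifold pieces, edge groups coming from the singular circles), producing a combinatorial matching between the $2$-orbifold pieces of $X_1$ and of $X_2$; the non-strongly-repetitive hypothesis is what ensures that the Euler-characteristic invariants of the pieces pin this matching down uniquely. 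Third, Theorem 5.2 of \cite{DST} supplies a homeomorphism between each pair of matched pieces. Finally, I would glue these piece-homeomorphisms along $\Sigma_i$ to assemble a global homeomorphism $X_1 \to X_2$.

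The main obstacle is the final gluing step. The examples underlying Theorems \ref{main1} and \ref{main2} are precisely situations where the individual pieces are homeomorphic but the attaching maps can be nontrivially twisted, producing homotopy-equivalent but non-homeomorphic unions. The role of the non-strongly-repetitive hypothesis together with the normalization built into $\mathcal{C}$ is to forbid exactly this kind of twist: once piece-homeomorphisms are fixed, the boundary identifications are forced to agree up to isotopy on each component of the singular set. Verifying this will likely require a case analysis on the mapping class groups of the boundary circles of the matched orbifold pieces and careful bookkeeping of how the $\pi_1$-isomorphism acts on the edge groups of the Bass--Serre decomposition. I expect most of the technical content of the proof to lie here, with the preceding steps being a relatively direct synthesis of intrinsic topology, Bass--Serre theory, and the DST piecewise rigidity result.
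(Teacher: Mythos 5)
There are two genuine gaps. First, your middle step --- ``translate the $\pi_1$-isomorphism into an isomorphism of the associated graph-of-groups decompositions'' --- is asserted rather than proved, and it is exactly the hard point. An abstract isomorphism of fundamental groups does not by itself respect the decomposition into orbifold pieces and singular circles; the paper has to manufacture this via a tower of torsion-free finite covers that are homotopic to hyperbolic P-manifolds and then invoke Lafont's rigidity theorem (Theorem \ref{Lafont}) in Lemma \ref{maps}. That is also where Assumption \ref{ass1} is indispensable: if the singular set of a cover is a non-planar graph (e.g.\ $K_{3,3}$, Figure 9 of the paper), the lifts of the gluing curves are not disjoint circles, Lafont's theorem is unavailable, and the piece-matching fails. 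Your proposal contains no hypothesis playing this role, and citing Theorem 5.2 of \cite{DST} does not fill it, since that result concerns covers of orbicomplexes of single generalized $\Theta$-graphs, whereas the issue here is matching jester-hat pieces across a homotopy equivalence of orbicomplexes whose singular sets are graphs.

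Second, your class $\mathcal{C}$ is too large and the ``normalized combinatorial form'' hides all the remaining content. Excluding only strongly repetitive $\Gamma$ does not pin down the matching of pieces: if $Ku = Lw$ with $K, L > 1$, then by Lemma \ref{jh} suitable covers of branch orbifolds of $\Theta_i$ and of $\Theta_k$ are homeomorphic jester hats, so Euler-characteristic data of the pieces does not determine which $\Theta$-graph they came from; this is why the paper requires $\Gamma$ \emph{not repetitive} (Definition \ref{C}), used crucially in Lemma \ref{a}. Worse, the paper's Remark 3.3 exhibits graphs that are neither repetitive nor a permuted pair whose orbicomplexes still admit homotopic, non-homeomorphic finite covers, so ``avoid the obstructions of Theorems \ref{main1} and \ref{main2}'' cannot by itself yield rigidity. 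The paper's actual route is to restrict the allowed singular sets to the constructed family $\mathcal{S}$ (Construction \ref{singularset}), prove that homotopic covers have equal cycle count vectors (Lemma \ref{a}), prove that these vectors are a complete label-preserving-homeomorphism invariant for $\mathcal{S}$ (Lemma \ref{b}), and then upgrade a label-preserving graph homeomorphism of singular sets to a homeomorphism of the orbicomplexes (Lemma \ref{cor}). Note also that your anticipated difficulty --- twisting in the mapping class groups of boundary circles --- is not where the obstruction lives in this setting: once a label-preserving homeomorphism of the singular graphs exists, the jester hats glue up without further obstruction; the real difficulty is the graph isomorphism problem for the singular sets, which the paper only solves on the restricted class $\mathcal{S}$.
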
 

\noindent \textbf{Acknowledgments.} The author would like to thank Tullia Dymarz for discussing this work in depth and editing many preliminary drafts. The author would also like to thank the anonymous referee for very helpful comments on a draft of the paper and especially for pointing out edge cases in the proofs of Theorems \ref{main1} and \ref{main2}. Additionally, the author thanks Emily Stark for helpful discussions.

\section{Preliminaries}
\label{sec:2}

We now introduce a construction of the Davis orbicomplex specific to the setting where the defining graph $\Gamma$ is a cycle of generalized $\Theta$ graphs consisting of $\Theta_i = \Theta(n_{i,1}, n_{i,2},...,n_{i,k})$ for $1 \leq i \leq N$. For a more detailed construction of $W_{\Gamma}$ and verification that $\pi_1(\mathcal{D}_{\Gamma})$ is indeed $W_{\Gamma}$, we refer the reader to Section 2 of \cite{Stark} and Section 3 of \cite{DST}.

First, we describe how to construct an orbifold $\mathcal{P}_{i,j}$ for a branch (edge) $b_{i,j}$ of a generalized $\Theta$ graph $\Theta_i$. For each $b_{i,j}$, construct a $(n_{i,j} + 2)$-gon with an edge of order 1, which we call a \textit{nonreflection edge}, and $n_{i,j} + 1$ reflection edges of order 2. All the vertices are order 4 vertices, with the exception of the two order 2 vertices adjacent to the non-reflection edge.

\begin{construction}[Davis Orbicomplex $\mathcal{D}_{\Gamma}$ of a cycle of generalized $\Theta$ graphs] 
\label{orbicomplexcons} First, we will construct an orbifold graph $S$. The underlying graph of $S$ is a star with one central vertex $v_0$ adjacent to $N$ valence one vertices. The valence one vertices are orbifold points of order 2. Cyclically label the orbifold points with $v_l$ where $1 \leq l \leq N$, and use $e_l$ to denote the edge $[v_0, v_l] \in E(S)$. Then attach the set of branch orbifolds $\mathcal{P}_{i,j}$ along their non-reflection edges to $e_i$ and $e_{i + 1}$, where the labels are taken mod $N$. An example of Construction \ref{orbicomplexcons} is shown in Figure \ref{fig:orbicomplex}.
o\end{construction}  

\begin{figure}[h!]
    \centering
    \includegraphics[width=0.8\textwidth]{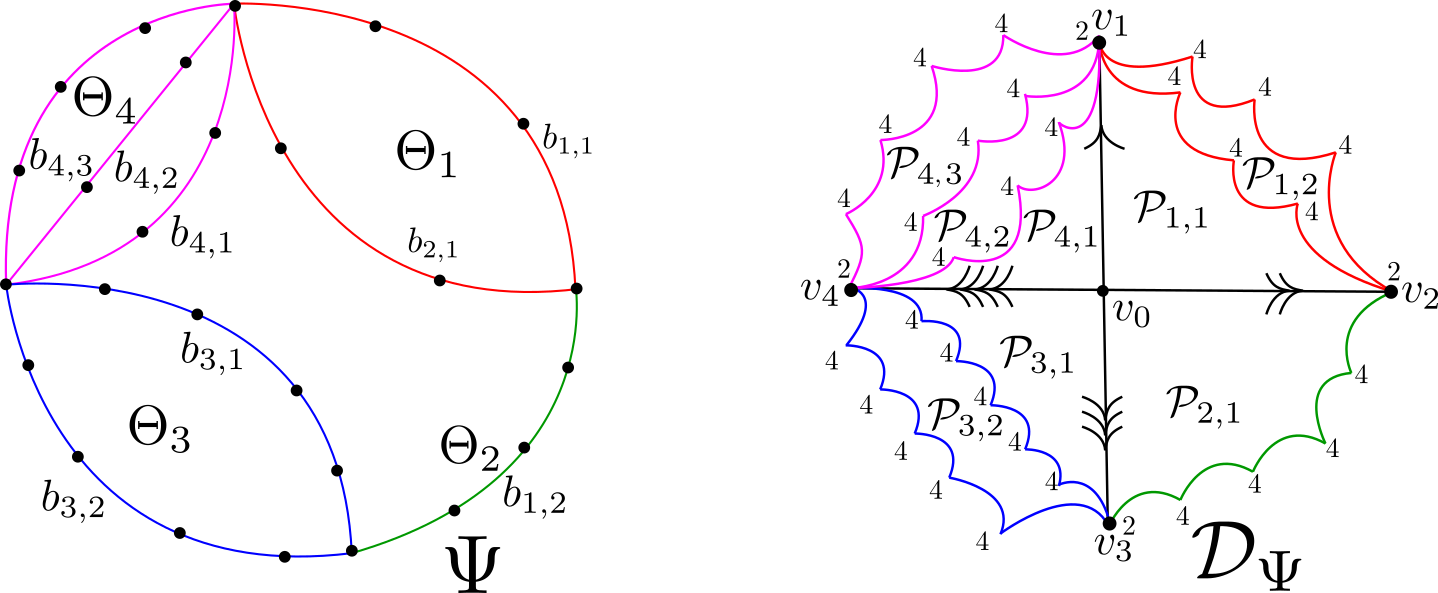}
    \caption{A cycle of generalized $\Theta$ graphs $\Psi$ along with its Davis orbicomplex $\mathcal{D}_{\Psi}$. We label edges $e_l$ in the singular star $S$ with $l$ arrows. Note that each branch $b_{i,j} \in \Psi$ determines an orbifold $\mathcal{P}_{i,j} \in \mathcal{D}_{\psi}$.}
    \label{fig:orbicomplex}
\end{figure}

Note that for the cycle of generalized $\Theta$ graphs $\Psi$ shown in Figure \ref{fig:orbicomplex}, the Euler characteristic vectors of $\Theta_1$ and $\Theta_3$ are $(-\frac{1}{4}, -
\frac{1}{4})$ and $(-\frac{3}{4}, -\frac{3}{4})$, so $3w_1 = w_3$, which means $\Psi$ is strongly repetitive. Theorem 1.7 then implies any class $\mathcal{X}$ that contains all finite-sheeted covers of $\mathcal{D}_{\Psi}$ is not topologically rigid.

All finite-sheeted covers of Davis orbicomplexes that we construct will contain \textit{jester hats}, a specific kind of orbifold defined below:

\begin{definition}[Jester hats]
Suppose $\mathcal{O} = D^2\underbrace{(2, 2, ..., 2)}_{n}$, i.e. a disk with $n$ order 2 points. Then we will call $\mathcal{O}$ a jester hat with $n$ (order two) cone points. 
\end{definition}

\section{Examples of topologically non-rigid sets}
\label{sec:3}

We first introduce the construction of jester hats that cover orbifolds in a Davis orbicomplex $\mathcal{D}_{\Gamma}$. 

\begin{lemma} 
\label{jh} 
Suppose $d$ is a positive even integer. Each orbifold $\mathcal{P}$ with $r$ reflection edges is covered by the jester hat $D^2\underbrace{(2, 2, ... ,2)}_{c}$, where $c = \frac{d}{2}(r - 3) + 2$ and $d$ is the degree of the cover.   
\end{lemma}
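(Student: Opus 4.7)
The plan is to use multiplicativity of the orbifold Euler characteristic under finite-sheeted orbifold covers. First I would compute $\chi^{\mathrm{orb}}(\mathcal{P})$ from the cell decomposition given in Construction~\ref{orbicomplex}: $\mathcal{P}$ consists of one 2-cell (trivial stabilizer), one non-reflection edge (trivial stabilizer) and $r$ reflection edges (each with $\mathbb{Z}/2$ stabilizer), two endpoint vertices of the non-reflection edge (each with $\mathbb{Z}/2$ stabilizer), and $r-1$ corner reflectors with dihedral stabilizer of order $4$. Summing $(-1)^{\dim\sigma}/|\mathrm{Stab}(\sigma)|$ over the cells gives
\[
\chi^{\mathrm{orb}}(\mathcal{P}) = 1 - \frac{r}{2} - 1 + 1 + \frac{r-1}{4} = \frac{3-r}{4}.
\]

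Next, for the jester hat $\mathcal{J}_c = D^2(2,\ldots,2)$ with $c$ order-$2$ cone points, I would perform an analogous cellular computation (or double across the boundary to produce $S^2$ with $2c$ cone points of order $2$) to obtain $\chi^{\mathrm{orb}}(\mathcal{J}_c) = 1 - c/2$. Setting $\chi^{\mathrm{orb}}(\mathcal{J}_c) = d\,\chi^{\mathrm{orb}}(\mathcal{P})$ forces $1 - c/2 = d(3-r)/4$, which rearranges to exactly $c = (d/2)(r-3) + 2$.

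It then remains to exhibit such a cover for each even $d$. At $d=2$ I would glue two mirror-image copies of $\mathcal{P}$ along all $r$ reflection edges to obtain the orientation double cover: a topological disk bounded by the bigon formed by the two lifted non-reflection edges, where each of the $r-1$ corner reflectors lifts to a single order-$2$ cone point (the dihedral stabilizer of order $4$ restricts to its orientation-preserving subgroup $\mathbb{Z}/2$) and each endpoint vertex lifts to a regular boundary point; this is exactly $\mathcal{J}_{r-1}$, matching the formula. For larger even $d$, I would iterate by taking an index-$(d/2)$ subgroup of $\pi_1^{\mathrm{orb}}(\mathcal{J}_{r-1}) \cong (\mathbb{Z}/2)^{*(r-1)}$ whose corresponding cover is planar; concretely, this amounts to choosing a connected union of $d/2$ tiles in the universal cover of $\mathcal{J}_{r-1}$ whose projection to $\mathcal{P}$ is $d$-to-one and whose underlying surface is still a disk. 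The main obstacle is verifying the existence and planarity of such subgroups, since in general a cover of a jester hat need not itself be a jester hat; once this topological condition is checked, the cone-point count is forced by the Euler characteristic computation above.
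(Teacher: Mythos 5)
Your Euler characteristic bookkeeping is correct ($\chi^{\mathrm{orb}}(\mathcal{P}) = \frac{3-r}{4}$, $\chi^{\mathrm{orb}}(D^2(2,\ldots,2)) = 1 - \frac{c}{2}$ with $c$ cone points), and your degree-two case is also correct: doubling $\mathcal{P}$ across its mirror edges turns each of the $r-1$ dihedral corners into an order-two cone point and gives the jester hat with $r-1 = \frac{2}{2}(r-3)+2$ cone points, which is in effect the final step of the paper's own construction. But the multiplicativity argument only shows that \emph{if} a degree-$d$ cover of $\mathcal{P}$ happens to be a jester hat, then its number of cone points must be $\frac{d}{2}(r-3)+2$; it does not produce such a cover. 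The existence of a jester-hat cover for every even $d$ is the actual content of the lemma, and this is exactly the step you flag as ``the main obstacle'' and leave unverified: you would need to exhibit, for each $d$, an index-$\frac{d}{2}$ subgroup of $\pi_1^{\mathrm{orb}}(\mathcal{J}_{r-1}) \cong (\mathbb{Z}/2)^{\ast(r-1)}$ whose corresponding cover is again a disk with only order-two cone points, and as you note a general cover of a jester hat can acquire genus, extra boundary circles, or unwrapped cone points. So as written the proposal is incomplete precisely where the lemma has content.

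The paper fills this gap constructively, following Stark and Crisp--Paoluzzi, and never needs the Euler characteristic count: it unfolds $\mathcal{P}$ alternately along the two reflection edges $r_a$ and $r_b$ adjacent to the non-reflection edge, so that at each stage the lifted non-reflection edges concatenate into a single boundary path and the reflection edges meeting at a right-angled corner straighten into single mirrors. Repeating this $\frac{d}{2}$ times yields a reflection orbifold of the same polygonal type with $\frac{d}{2}(r-3)+3$ mirror edges covering $\mathcal{P}$ with degree $\frac{d}{2}$; doubling this intermediate orbifold along all its mirrors (your $d=2$ move applied to it) then gives a disk with $\frac{d}{2}(r-3)+2$ order-two cone points covering $\mathcal{P}$ with degree $d$. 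If you want to keep your subgroup-theoretic framing, you could repair the argument by making this unfolding explicit — it amounts to specifying the ``connected union of $\frac{d}{2}$ tiles'' you allude to and checking it is a disk — but some such explicit construction must be carried out; the cone-point count alone does not prove existence.
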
 

\begin{proof} This construction is based on Stark's construction in Lemma 3.1 from \cite{Stark} and Crisp and Paoluzzi's construction from Section 3.1 of \cite{cp}. Using Crisp and Paoluzzi's construction, we observe that for any even integer $d > 0$, an orbifold $\widehat{\mathcal{O}}$ with $\frac{d}{2}(r - 3) + 3$ reflection edges is tiled by $\frac{d}{2}$ copies of an orbifold with $r$ reflection edges, so $\widehat{\mathcal{O}}$ is a $\frac{d}{2}$-sheeted orbifold cover of $\mathcal{O}$. For example, in Figure \ref{fig:fincovs}, an orbifold $\widehat{\mathcal{O}}$ with $6$ reflection edges is tiled by 3 copies of $\mathcal{O}$, an orbifold with $4$ reflection edges. Thus, $\widehat{\mathcal{O}}$ is a 3-sheeted orbifold cover of $\mathcal{O}$. Next, if we unfold along the reflection edges of $\widehat{\mathcal{O}}$, we obtain a closed disk with $\frac{d}{2}(r - 3) + 2$ order two cone points, as desired. The construction is illustrated in Figure \ref{fig:fincovs}. 

\end{proof} 

\begin{figure}[H] 
    \begin{center} 
    \includegraphics[width=\textwidth]{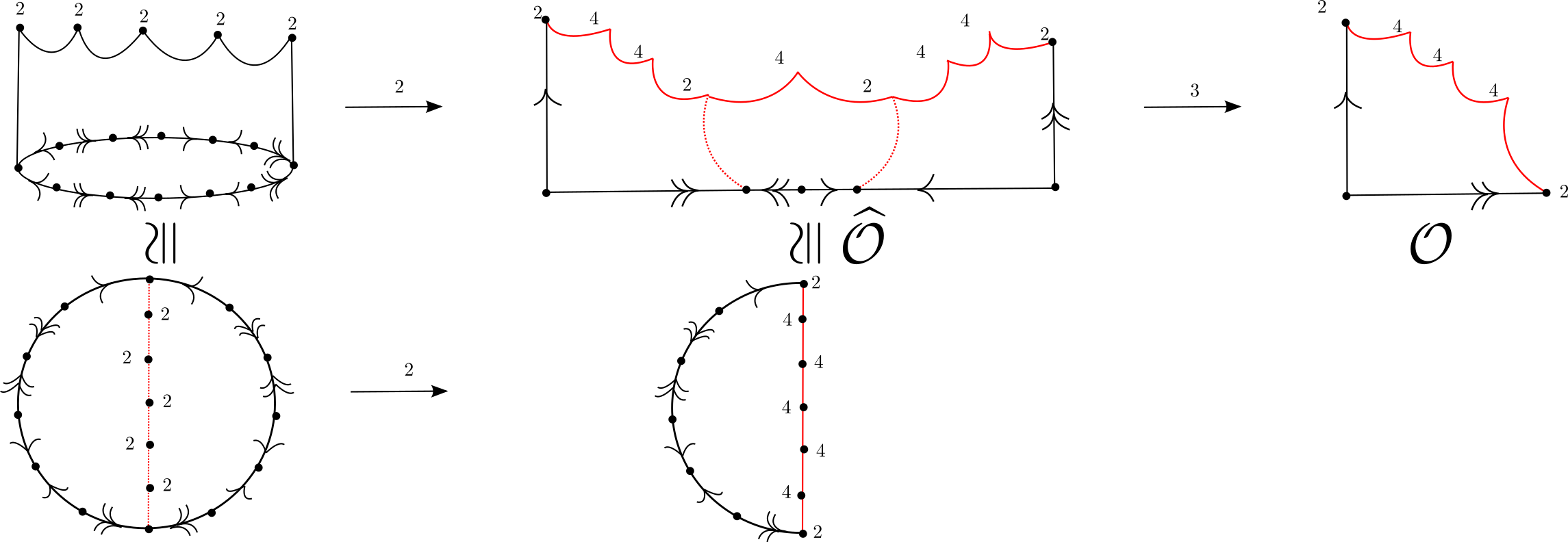}
    \end{center} 
    \label{fig:fincovs}
    \caption{A tower of covers illustrating the lemma. Here, $D^2(2, 2, 2, 2, 2)$ is a six-fold cover of an orbifold with four reflection edges.} 
\end{figure} 

\begin{construction}[The double of a singular set]
\label{double} 
The Davis orbicomplex $\mathcal{D}_{\Gamma}$ has a singular subset $S$ consisting of an orbifold star graph with $N$ order two points, where $N$ is the number of essential vertices in $\Gamma$. We can construct a double cover of $S$, which we call $\widehat{S}$, by unfolding along order two points to obtain a subdivided generalized $\Theta$-graph with $N$ branches and one vertex on each branch between the essential vertices. For an illustration, refer to Figure \ref{fig:thetacover}. All the singular sets constructed in this section will be a finite-sheeted cover of $\widehat{S}$. 

\begin{figure}[h]
    \centering
    \includegraphics[width=0.6\textwidth]{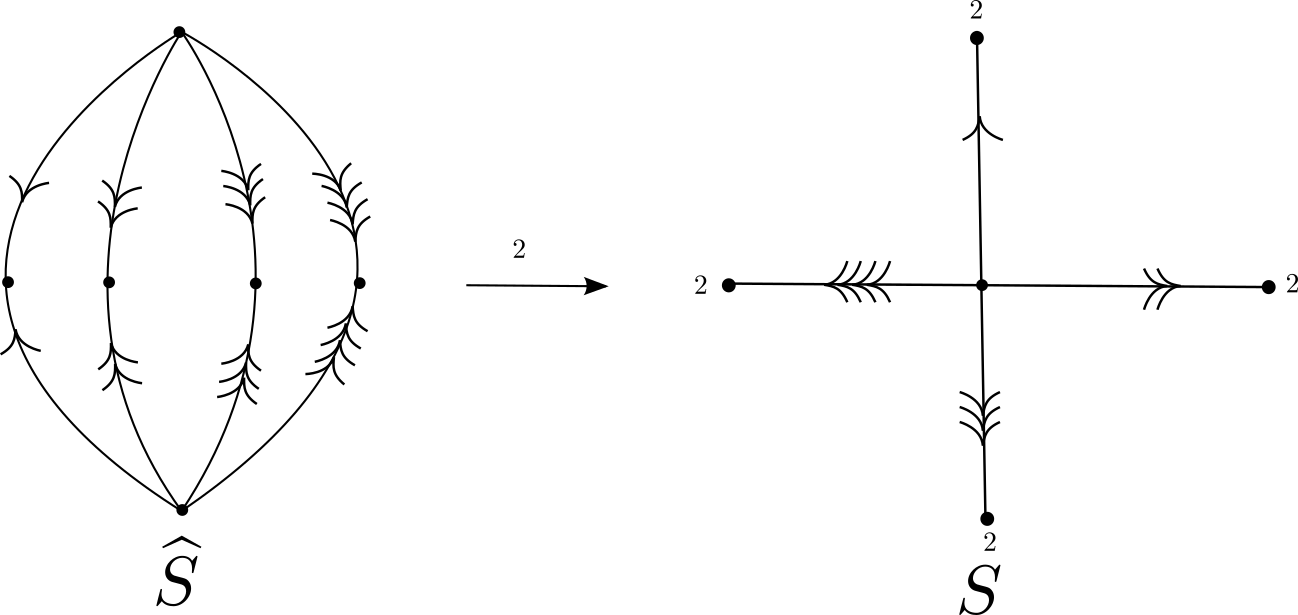}
    \caption{A generalized $\Theta$ graph with four branches two-fold covers the singular subset of the orbicomplex from Figure \ref{fig:orbicomplex}. Here, $N = 4$.}
    \label{fig:thetacover}
\end{figure}
\end{construction} 

For all of the covers described in this paper, all the edges will be subdivided by a copy of the lift of an order two point $\tilde{v_i}$ from the Davis orbicomplex. Both subdivisions will be oriented towards $\tilde{v_i}$ and labeled with the label of the edge, which we will specify in the construction. For simplicity, we will count a subdivided edge as one edge when calculating cycle lengths.

\begin{proposition}
Let $\Gamma$ be strongly repetitive. Then there exist homotopic but non-homeomorphic finite-sheeted covers of $\mathcal{D}_{\Gamma}$.
\end{proposition}

\noindent \textit{Proof.} Suppose $u$ and $w$ are Euler characteristic vectors of $\Theta_i$ and $\Theta_k$ respectively where $Ku = w$ for some $K \in \mathbb{Z}_{+}$. Without loss of generality, assume $i = 1$ since we can rotate the labels of the essential vertices otherwise. Suppose $\Theta_1$ and $\Theta_k$ have $l$ branches. Let $n_{s,b}$ denote the number of vertices on the $b$th branch of $\Theta_s$. Then if $u = (\frac{1 - n_{1,1}}{4}, \frac{1 - n_{1,2}}{4}, ..., \frac{1 - n_{1,l}}{4})$ and $w = (\frac{1 - n_{k,1}}{4}, \frac{1 - n_{k,2}}{4}, ..., \frac{1 - n_{k,l}}{4})$ then for $1 \leq b \leq l$, $K\big(\frac{1 - n_{1,b}}{4}\big) = \big(\frac{1 - n_{k,b}}{4}\big)$ and $K(1 - n_{1,b}) = 1 - n_{k,b}$. If $r_{s,b}$ denotes the number of reflection edges on the orbifold in the Davis orbicomplex constructed from the $b$th branch of $\Theta_s$, then $r_{s,b} = n_{s,b} + 2$, so we have $K(r_{1,b} - 3) + 3 = r_{k,b}$. \\

\noindent \textbf{Case 1} ($\mathbf{K = 1}$):
We will first consider some general cases before addressing the edge case where $N \neq 3$. First, suppose $K = 1$ and $k < N - 1$. Then $r_{1, b} = r_{k, b}$ for $1 \leq b \leq l$. We will construct two non-homeomorphic but homotopic four-sheeted covers of $\mathcal{D}_{\Gamma}$, which we will call $\widetilde{X_1}$ and $\widetilde{X_2}$. First, we construct their singular sets $\widetilde{S_1}$ and $\widetilde{S_2}$ with four essential vertices, $\tilde{v}_1, \tilde{v}_2, \tilde{v}_3, \tilde{v}_4$ and $\tilde{v}'_1, \tilde{v}'_2, \tilde{v}'_3, \tilde{v}'_4$.  To construct $\widetilde{S_1}$, add $k$ edges between two pairs of vertices: $\tilde{v}_1$ and $\tilde{v}_4$, as well as $\tilde{v}_2$ and $\tilde{v}_3$. The edges will be labeled with all integers between $1$ and $k$ and subdivided as described earlier in the section. Between two other pairs of vertices, $\tilde{v}_{1}$ and $\tilde{v}_2$, as well as $\tilde{v}_3$ and $\tilde{v}_4$, construct $N - k$ subdivided edges labeled with all integers between $k + 1$ and $N$. Thus, in total, there are two (subdivided) four-cycles labeled with $k$ and $k + 1$ as well as $1$ and $N$, and $N - 2$ cycles of length two labeled with $i$ and $i + 1$, where $1 \leq i \leq N - 1$, $i \neq k$. For the other singular set, $\widetilde{S_2}$, there is one edge labeled with $1$ between two pairs of vertices: $\tilde{v}'_1$ and $\tilde{v}'_4$, as well as $\tilde{v}'_2$ and $\tilde{v}'_3$. Additionally, there are $N - 1$ edges between two other pairs of vertices, $\tilde{v}'_1$ and $\tilde{v}'_2$ as well as $\tilde{v}'_3$ and $\tilde{v}'_4$; these edges are labeled with all integers between $2$ and $N$. In total, there are again two four-cycles with edges labeled with $1$ and $2$ as well as $1$ and $N$, and $N - 2$ cycles of length two labeled with $i$ and $i + 1$ where $2 \leq i \leq N - 1$. 

By the conditions imposed on $\mathcal{D}_{\Gamma}$, the four-cycles labeled by $k$ and $k + 1$ in $\widetilde{S_1}$ and $1$ and $2$ in $\widetilde{S_2}$ have the same set of jester hats glued to them. Additionally, the two-cycles labeled with $1$ and $2$ in $\widetilde{S_1}$ and $k$ and $k + 1$ in $\widetilde{S_2}$ have the same set of jester hats glued to them. For other integers $I$ where $1 \leq I \leq N$ and $I \neq 1, k$, for every cycle in $\widetilde{S_1}$ labeled with $I$ and $I + 1$, there is a corresponding cycle of the same length labeled with $I$ and $I + 1$ in $\widetilde{S_2}$. As a result, for each cycle in $\widetilde{S_1}$ with a set of jester hats glued to it, there is a corresponding cycle in $\widetilde{S_2}$ with the same set of jester hats glued to it. By taking their regular neighborhoods, we can see that both $\widetilde{S_1}$ and $\widetilde{S_2}$ are homotopic to a $(2N - 4)$-holed sphere. We can then conclude that $\widetilde{X_1}$ and $\widetilde{X_2}$ are homotopic to the same $(2N - 4)$-holed sphere with the same same sets of jester hats glued to their boundary components. However, they are not homeomorphic since the complements of their cut pairs have different numbers of connected components.

If $N \neq 3$, there are two special cases where the above construction does not work. First, note that if $k = N$, the construction does not work because the first cover will be disconnected. Thus, we construct modified non-homeomorphic but homotopic degree four covers. As before, we will call these covers $\widetilde{X_1}$ and $\widetilde{X_2}$ with singular sets $\widetilde{S_1}$ and $\widetilde{S_2}$ that have essential vertices $\tilde{v_i}$ and $\tilde{v'_i}$ respectively, where $1 \leq i \leq 4$. For $\widetilde{S_1}$, construct two subdivided edges labeled with 1 and 2 between $\tilde{v_1}$ and $\tilde{v_4}$ as well as $\tilde{v_2}$ and $\tilde{v_3}$. Then between vertices $\tilde{v_3}$ and $\tilde{v_4}$ as well as $\tilde{v_1}$ and $\tilde{v_2}$, construct $N - 2$ edges with labels between $3$ and $N$. For $\widetilde{S_2}$, between $\tilde{v'_1}$ and $\tilde{v'_4}$ as well as $\tilde{v_2}$' and $\tilde{v'_3}$, construct $N - 1$ subdivided edges labeled with all integers between $3$ and $N$ and one subdivided edge labeled with 1. Then construct a subdivided vertex labelled with 2 between $\tilde{v'_3}$ and $\tilde{v'_4}$ as well as $\tilde{v'_1}$ and $\tilde{v'_2}$.  

Second, note that if $k = N - 1$, the general construction does not work since $\widetilde{S_1}$ and $\widetilde{S_2}$ are homeomorphic; therefore, $\widetilde{X_1}$ and $\widetilde{X_2}$ are also homeomorphic. In order to fix this, construct new $\widetilde{S_1}$ and $\widetilde{S_2}$ as follows: for $\widetilde{S_1}$, construct 3 edges between $\tilde{v_1}$ and $\tilde{v_2}$ as well as $\tilde{v_3}$ and $\tilde{v_4}$, which are labeled with $1, 2$, and $N$. Then construct $N - 3$ edges between $\widetilde{v_1'}$ and $\widetilde{v'_4}$ as well as $\widetilde{v'_2}$ and $\widetilde{v'_3}$ labeled from $3$ to $N - 1$. For $\widetilde{S_2}$, follow the construction for the case where $k = N$. Again, in both edge cases ($k = N$, $k = N - 1$, $N > 3$), both $\widetilde{S_1}$ and $\widetilde{S_2}$ are homotopic to a $(2N - 4)$-holed sphere and have the same sets of jester hats glued to them, but complements of their cut pairs have different numbers of connected components. Thus, $\widetilde{S_2}$ and $\widetilde{S_1}$ are not homeomorphic but $\widetilde{X_1}$ and $\widetilde{X_2}$ are homotopic.

For the case $N = 3$, any of the previous constructions will yield homeomorphic $\widetilde{X_1}$ and $\widetilde{X_2}$, so a different pair of covers is necessary. For this special case, we will construct 16-sheeted covers. Assume without loss of generality that $\Theta_1$ and $\Theta_3$ have the same Euler characteristic vectors. For both $\widetilde{S_1}$ and $\widetilde{S_2}$, label edges $[v_i, v_{i + 1}]$ and $[v'_i, v'_{i + 1}]$ respectively with $2$ if $i$ is odd and $3$ if $i$ is even. Then construct the edges $[\widetilde{v_2}, \widetilde{v_{13}}]$, $[\widetilde{v_3}, \widetilde{v_{12}}]$, $[\widetilde{v_4}, \widetilde{v_{11}}]$, $[\widetilde{v_1}, \widetilde{v_{16}}]$, $[\widetilde{v_{14}}, \widetilde{v_{15}}]$, $[\widetilde{v_9}, \widetilde{v_{10}}]$, $\widetilde{v_7}, \widetilde{v_8}]$, and $[\widetilde{v_5}, \widetilde{v_6}]$ labelled with $1$. For $\widetilde{S_2}$, construct edges $[\widetilde{v'_1}, \widetilde{v'_{16}}]$, $[\widetilde{v'_2}, \widetilde{v'_{15}}]$, $[\widetilde{v'_{13}}, \widetilde{v'_{14}}]$, $[\widetilde{v'_3}, \widetilde{v'_{12}}]$, $[\widetilde{v'_4}, \widetilde{v'_5}]$, $[\widetilde{v'_6}, \widetilde{v'_7}]$, $[\widetilde{v'_8}, \widetilde{v'_{11}}]$, and $[\widetilde{v'_9}, \widetilde{v'_{10}}]$ and label them with $1$. As a result, $\widetilde{S_1}$ has one 6-cycle, one 4-cycle, and three 2-cycles labelled with 1 and 2 and one 8-cycle, one 4-cycle, and two 2-cycles labelled with 1 and 3. On the other hand, $\widetilde{S_2}$ has the same set of cycle counts with different labels: one 6-cycle, one 4-cycle, and three 2-cycles labelled with 1 and 3 and one 8-cycle, one 4-cycle, and two 2-cycles labelled with 1 and 2. Thus, both $\widetilde{S_1}$ and $\widetilde{S_2}$ are homotopic to $10$-holed spheres, and by construction have the same sets of jester hats glued to them; thus, $\widetilde{X_1}$ and $\widetilde{X_2}$ are homotopic. Observe, however, that $\widetilde{S_1}$ and $\widetilde{S_2}$ are not homeomorphic, as desired. This completes the proof for $K = 1$. 

\noindent \textbf{Case 2} ($\mathbf{K > 1}$): Suppose $K > 1$ and $K = p_1^{u_1}p_2^{u_2}...p_t^{u_t} = \prod\limits_{d = 1}^t p_d^{u_d}$ is the prime factorization of $K$. We will now construct two non-homeomorphic $2\prod\limits_{d = 1}^t p_d^{u_d + 1}$-sheeted covers $\widetilde{X_1}$ and $\widetilde{X_2}$. First, we construct their singular sets $\widetilde{S_1}$ and $\widetilde{S_2}$, which will both have $2\prod\limits_{d = 1}^n p_d^{u_d + 1}$ essential vertices, which we will label $\tilde{v}_i$ and $\widetilde{v'}_i$ for $1 \leq i \leq 2\prod\limits_{d = 1}^t p_d^{u_d + 1}$. Note that all the vertex indices in the construction will be taken modulo $2\prod\limits_{d = 1}^t p_d^{u_d}$.\\

\noindent 
\textbf{Construction of $\widetilde{S_1}$:} First, suppose that $k \neq N$; if $k = N$, the following construction will be disconnected. For the first singular set $\widetilde{S_1} \subset \widetilde{X_1}$, for even $i$, construct $k$ edges between $\tilde{v}_i$ and $\tilde{v}_{i + 1}$ mod $2\prod\limits_{d = 1}^t p_d^{u_d + 1}$. The edges will be labeled with all integers $I$ where $2 \leq I \leq k + 1$. For odd $i$, construct $N - k$ edges between $\tilde{v}_i$ and $\tilde{v}_{i + 1} \mod 2\prod\limits_{d = 1}^t p_d^{u_d + 1}$. One of these edges will be labeled with $1$ and the rest with all integers $I$ where $k + 2 \leq I \leq N$. As a result, for $i \neq 1, k + 1$, between $\widetilde{v_i}$ and $\widetilde{v_{i + 1}}$, there is a copy of a two-cycle that is a double cover of the nonreflection edges in $\mathcal{D}_{\Gamma}$ labeled with $i$ and $(i + 1)$ $\mod N$. We will then have a graph with $\prod\limits_{d = 1}^t p_d^{u_d + 1}$ copies of two-cycles labeled with $i$ and $(i + 1)$ for $2 \leq i \leq N, i \neq k + 1$, as well as two $2\prod\limits_{d = 1}^t p_d^{u_d + 1}$-cycles, one labeled with $1$ and $2$, and one labeled with $k + 1$ and $k + 2$. For an example, refer to the graph on the right in Figure \ref{fig:homcovers}, which is an 18-sheeted cover of the singular set from the orbicomplex in Figure \ref{fig:orbicomplex}.\\ 

We now consider the special case where $k = N$. For even $i$, construct $N - 2$ edges between $\tilde{v_i}$ and $\tilde{v}_{i + 1}$ labeled with integers $I$ where $2 \leq I \leq N - 1$. Then for odd $i$, construct $2$ edges between vertices $\tilde{v_i}$ and $\tilde{v}_{i + 1}$ labeled with $1$ and $N$. We will again have a graph with $\prod\limits_{d = 1}^t p_d^{u_d + 1}$ copies of two-cycles labeled with $i$ and $(i + 1)$ for $ i = N$ and $2 \leq i \leq N - 2$. We will also still have two $2\prod\limits_{d = 1}^t p_d^{u_d + 1}$-cycles, one labeled with $N - 1$ and $N$ and the other with $1$ and $2$ as before. 

\noindent 
\textbf{Construction of $\widetilde{S_2}$:} For the second singular set $\widetilde{S_2} \subset \widetilde{X_2}$, first partition $\{\tilde{v'}_i\}$ into $p = \prod\limits_{d = 1}^t p_d$ sets of equal size, so the first set will contain vertices $\tilde{v'}_i$ where $1 \leq i \leq 2\prod\limits_{d = 1}^t p_d^{u_d}$, the second set will contain vertices where $2\prod\limits_{d = 1}^t p_d^{u_d} + 1 \leq i \leq 4\prod\limits_{d = 1}^t p_d^{u_d}$, so in general, the $n$th set will contain $\tilde{v'}_i$ labeled $2n\bigg(\prod\limits_{d = 1}^t p_d^{u_d}\bigg) + 1 \leq i \leq 2(n + 1)\prod\limits_{d = 1}^t p_d^{u_d}$, where $0 \leq n \leq p - 1$. 

We first consider the case where $k \neq N$. Construct $k - 1$ edges between the pairs of vertices labeled $\tilde{v'}_{2n\big(\prod\limits_{d = 1}^t p_d^{u_d}\big) + 1}$ and $\tilde{v'}_{2(n + 1)\prod\limits_{d = 1}^t p_d^{u_d}}$, for $0 \leq n \leq p - 1$. Label these edges with integers $I$, where $2 \leq I \leq k$. For all $1 \leq n \leq p$, construct an edge each labeled with $k + 1$ between $\tilde{v'}_{2n\prod\limits_{d = 1}^t p_d^{u_d}}$ and $\tilde{v'}_{2n\big(\prod\limits_{d = 1}^t p_d^{u_d}\big) + 1}$. Finally, add edges to the remaining vertices with no edges between them. For even $i$, if there are no edges between $\tilde{v'}_i$ and $\tilde{v'}_{i + 1}$, add $k$ edges and label them with integers $I$ where $2 \leq I \leq k + 1$. For odd $i$, add $k$ edges between all $\tilde{v'}_i$ and $\tilde{v'}_{i + 1}$ with no edges between them. One of these edges will be labeled with a $1$, while the rest are labeled with integers $I$ such that $k + 2 \leq I \leq N$. As a result, we will have $p$ cycles of length $2\prod\limits_{d = 1}^t p_d^{u_d}$ labeled with 1 and 2, $\prod\limits_{d = 1}^t p_d^{u_d + 1}$ two-cycles labeled with $i$ and $i + 1$ for $2 \leq i \leq k$ or $k + 2 \leq i \leq N$, $\prod\limits_{d = 1}^t p_d^{u_d}$ two-cycles labeled with $k$ and $k + 1$, and a $2p$-cycle labeled with $k$ and $k + 1$. For an example, see the graph on the left in Figure \ref{fig:homcovers}. \\

Now consider the edge case of $k = N$. This time, we will construct one edge labeled with $1$ between the pairs of vertices labeled $\tilde{v'}_{2n\big(\prod\limits_{d = 1}^t p_d^{u_d}\big) + 1}$ and $\tilde{v'}_{2(n + 1)\prod\limits_{d = 1}^t p_d^{u_d}}$, for $0 \leq n \leq p - 1$. For all $1 \leq n \leq p$, construct an edge labeled with $N$ between $\tilde{v'}_{2n\prod\limits_{d = 1}^t p_d^{u_d}}$ and $\tilde{v'}_{2n\big(\prod\limits_{d = 1}^t p_d^{u_d}\big) + 1}$. For the other pairs of vertices, for even $i$, if there are no edges between $\tilde{v'}_i$ and $\tilde{v'}_{i + 1}$, add $2$ edges and label them with $1$ and $N$. For odd $i$, add $N - 2$ edges between all $\tilde{v'}_i$ and $\tilde{v'}_{i + 1}$ labeled with integers $I$ such that $2 \leq I \leq N - 1$. As a result, we will have $p$ cycles of length $2\prod\limits_{d = 1}^t p_d^{u_d}$ labeled with 1 and 2, $\prod\limits_{d = 1}^t p_d^{u_d + 1}$ each of two-cycles labeled with $i$ and $i + 1$ for $2 \leq i \leq N - 2$ or $i = N$, and a $2p$-cycle labeled with $1$ and $N$.

Observe that $\widetilde{S_1}$ and $\widetilde{S_2}$ are not homeomorphic since $\widetilde{S_1}$ has many more cut pairs. In particular, any two essential vertices will form a cut pair in $\widetilde{S_1}$, but $\widetilde{S_2}$ only has $2p \choose 2$ pairs of cut pairs. As a result, $\widetilde{X}_1$ and $\widetilde{X}_2$ are not homeomorphic. However, note that $\widetilde{S_1}$ and $\widetilde{S_2}$ are homotopic; take a regular neighborhood of both graphs to obtain a $2\prod\limits_{d = 1}^t p_d^{u_d + 1}(N - 2) + 1$-holed sphere. 

\begin{figure}
    \centering
    \includegraphics[width=0.8\textwidth]{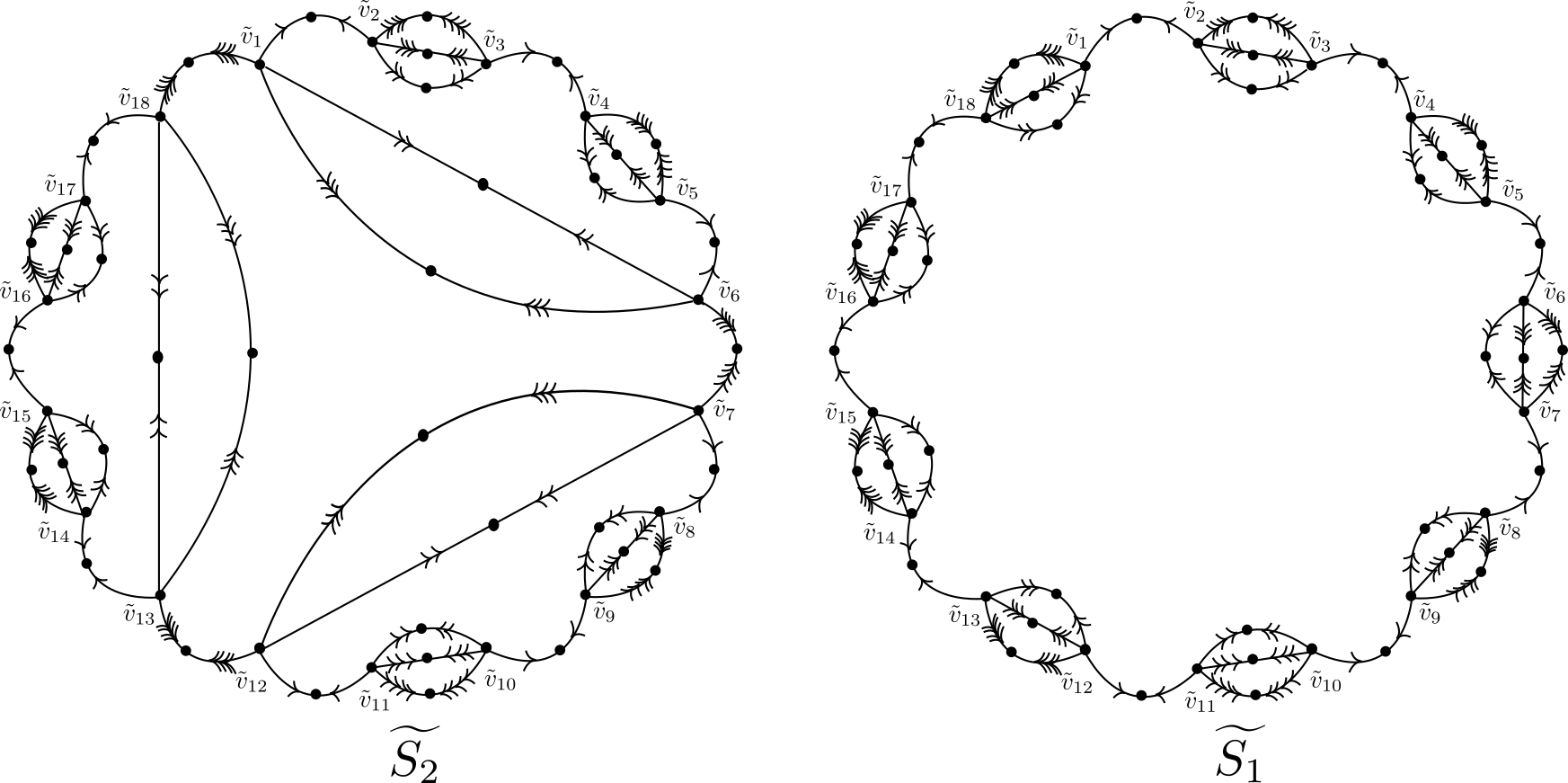}
    \caption{Two homotopic but non-homeomorphic covers of the singular set of $W_{\Gamma}$ from Figure \ref{fig:orbicomplex}. Instead of labeling the edges with numbers, we use different numbers of arrows.}
    \label{fig:homcovers}
\end{figure}

We then examine what orbifolds are glued to $\widetilde{S_1}$. Recall that $r_{s, b}$ denotes the number of reflection edges of the orbifold corresponding to the $b$th branch of $\Theta_s \subset \Gamma$. By Lemma 2.1, we can see that there will be a disk with $\prod\limits_{d = 1}^t p_d^{u_d + 1}(r_{s,b} - 3) + 2$ cone points glued along its boundary circle to the $2\prod\limits_{d = 1}^t p_d^{u_d}$-cycles labeled by $1$ and $2$ and $k + 1$ and $k + 2$ for $s = 1, k + 1$ for the case $k \neq N$; if $k = N$, the second $2\prod\limits_{d = 1}^t p_d^{u_d}$-cycle is labeled with $N - 1$ and $N$ and $s = N - 1$. Additionally, there will be a disk with $r_{i,b} - 1$ cone points glued to each of the $\prod\limits_{d = 1}^t p_d^{u_d + 1}$ two-cycles labeled by $i$ and $i + 1 \mod N + 1$ for $2 \leq i \leq k$ or $k + 2 \leq i \leq N$ if $k \neq N$ ($2 \leq i \leq N - 2$ or $i = N$ if $k = N$). 

Next, we list the orbifolds glued to $\widetilde{S_2}$. First, there will be $p$ copies of disks with $\prod\limits_{d = 1}^t p_d^{u_d}(r_{1,b} - 3) + 2$ cone points glued to the $2\prod\limits_{d = 1}^t p_d^{u_d}$-cycles with edges labeled with 1 and 2. Second, there will be one copy of a disk with $\prod\limits_{d = 1}^t p_d^{u_d + 1}(r_{k + 1, b} - 3) + 2$ cone points glued to the $2\prod\limits_{d = 1}^t p_d^{u_d + 1}$ cycle labeled by $k + 1$ and $k + 2$ if $k \neq N$ and $N - 1$ and $N$ if $k = N$. There will also be $\prod\limits_{d = 1}^t p_d^{u_d + 1}$ copies of a disk with $r_{i,b} - 1$ cone points glued to each 2-cycle labeled by $i$ and $i + 1 \mod N$ for $2 \leq i \leq k - 1$ or $k + 1 \leq i \leq N$ if $k \neq N$ and $2 \leq i \leq N - 2$ if $k = N$. Finally, there are $\prod\limits_{d = 1}^t p_d^{u_d}$ copies of a disk with $r_{k,b} - 1$ cone points glued to each two-cycle labeled by $k$ and $k + 1$  as well as one copy of a disk with $p(r_{k,b} - 3) + 2$ cone points glued to the $2p$-cycle labeled by $k$ and $k + 1$. Then for both orbicomplex covers, we have the same collection of orbifolds glued to $\widetilde{S_1}$ and $\widetilde{S_2}$: 

\begin{itemize}
    \item $\prod\limits_{d = 1}^t p_d^{u_d + 1}$ copies of each disk with $r_{i,b} - 1$ cone points, where $2 \leq i \leq k - 1$ or $k + 1 \leq i \leq N$ if $k \neq N$ and $2 \leq i \leq N - 2$ if $k = N$; 
    \item One copy of a disk with $\prod\limits_{d = 1}^t p_d^{u_d + 1}(r_{k + 1, b} - 3) + 2$ cone points;
    \item $\prod\limits_{d = 1}^t p_d^{u_d} + p = \prod\limits_{d = 1}^t p_d^{u_d + 1}$ copies of a disk with $r_{k, b} - 1 = \prod\limits_{d = 1}^t p_d^{u_d}(r_{1,b} - 3) + 2$ cone points since there are $\prod\limits_{d = 1}^t p_d^{u_d}$ copies of two-cycles labeled with $k$ and $k + 1$ as well as $p$ copies of $2\prod\limits_{d = 1}p_d^{u_d}$-cycles labeled with $1$ and $2$ in $\widetilde{S_2}$ and $\prod\limits_{d = 1}^t p_d^{u_d + 1}$ copies of two-cycles labeled with $k$ and $k + 1$ in $\widetilde{S_1}$ ;  
    \item One copy of the disk with $p(r_{k,b} - 3) + 2 = \prod\limits_{d = 1}^t p_d^{u_d + 1}(r_{1, b} - 3) + 2$ cone points since there is one 2$p$-cycle labeled with $k$ and $k + 1$ in $\widetilde{S_2}$ and one 2$\prod\limits_{d = 1}^t p_d^{u_d + 1}$-cycle labeled with 1 and 2 in $\widetilde{S_1}$.   
\end{itemize}
 As a result, since $\widetilde{X}_1$ and $\widetilde{X}_2$ have the same sets of jester hats glued to homotopic graphs, we have found finite covers that are homotopic but not homeomorphic. This proves Theorem \ref{main1}.\\ \qed 

Next, we prove a Lemma which immediately implies Theorem \ref{main2}.

\begin{lemma}
\label{samethetas}
Suppose $\Gamma_1$ and $\Gamma_2$ form a non-isomorphic permuted pair (see Definition \ref{Cprime}). Then there exist finite-sheeted covers of $D_{\Gamma_1}$ and $D_{\Gamma_2}$ that are homotopic but not homeomorphic.   
\end{lemma}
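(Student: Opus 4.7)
The plan is to construct finite-sheeted covers $\widetilde{X_1}$ of $\mathcal{D}_{\Gamma_1}$ and $\widetilde{X_2}$ of $\mathcal{D}_{\Gamma_2}$ modeled on the $K = 1$ construction used in the previous Proposition, adapted to exploit the fact that $\Gamma_1$ and $\Gamma_2$ consist of the same multiset of generalized $\Theta$-graphs $\{\Theta_1,\ldots,\Theta_N\}$ arranged in distinct cyclic orders. Since the two defining graphs share the same $\Theta$-factors, the collection of branch orbifolds in $\mathcal{D}_{\Gamma_1}$ coincides with that of $\mathcal{D}_{\Gamma_2}$, and by Lemma~\ref{jh} the collection of jester hats that appears after passing to the covers coincides as well.

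Concretely, I would build singular sets $\widetilde{S_1}$ and $\widetilde{S_2}$ of four non-valence-two vertices each, with subdivided edges labeled by $1,\ldots,N$ and arranged into two $4$-cycles and a collection of $2$-cycles, following the blueprint of the $K = 1$ construction in the previous Proposition. The crucial modification is that the labels of the edges in $\widetilde{S_1}$ will reflect the cyclic ordering of the $\Theta_i$'s in $\Gamma_1$, while those of $\widetilde{S_2}$ will reflect the (distinct) cyclic ordering of the $\Theta_i$'s in $\Gamma_2$. Then the jester hats glued to the cycles of $\widetilde{S_1}$ and to the cycles of $\widetilde{S_2}$ form identical multisets while realizing two distinct combinatorial patterns of attachment.

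Homotopy equivalence will follow exactly as in the previous Proposition: taking regular neighborhoods, $\widetilde{S_1}$ and $\widetilde{S_2}$ are each homotopy equivalent to a holed sphere of the same topological type, and the identical multiset of jester hats glued to the boundary components yields $\widetilde{X_1} \simeq \widetilde{X_2}$. Non-homeomorphism will be detected by the cut-pair structure of the singular sets: the cyclic arrangement of jester hats is reflected in the multisets of jester hats occupying each connected component of the complement of a cut pair, and when the permutation relating $\Gamma_1$ and $\Gamma_2$ is not realizable by a dihedral symmetry of the cyclic arrangement, these multisets cannot be matched up by any homeomorphism sending cut pairs in $\widetilde{X_1}$ to cut pairs in $\widetilde{X_2}$.

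The main obstacle will be carrying out the singular-set construction when the two base orbicomplexes differ, rather than coincide as they do in the previous Proposition, and verifying that the cut-pair invariant is sharp enough to distinguish every non-trivially permuted pair; borderline situations in which coincidences among the $\Theta_i$'s introduce residual dihedral symmetry in the cyclic ordering will likely require either a refined cut-pair argument or a passage to higher-degree covers before the cyclic-order distinction can be recovered topologically.
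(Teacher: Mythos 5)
Your overall strategy is the same as the paper's: take four-sheeted covers of $\mathcal{D}_{\Gamma_1}$ and $\mathcal{D}_{\Gamma_2}$ whose singular sets have four non-valence-two vertices, two labeled four-cycles and $N-2$ doubled two-cycles, deduce homotopy equivalence from regular neighborhoods together with equal multisets of jester hats, and detect non-homeomorphism from the singular sets. But there is a genuine gap at precisely the point where this lemma differs from the $K=1$ case of the preceding Proposition: you never specify which two $\Theta$-graphs receive the four-cycles in each cover, and that choice is what makes the jester-hat multisets agree. Saying that the labels of $\widetilde{S_1}$ ``reflect the cyclic ordering of $\Gamma_1$'' and those of $\widetilde{S_2}$ ``reflect the cyclic ordering of $\Gamma_2$'' does not settle this: if you run the same labeled construction on both sides, the four-cycles of $\widetilde{S_1}$ carry four-fold covers of the branch orbifolds of two particular $\Theta$-graphs of $\Gamma_1$, while the four-cycles of $\widetilde{S_2}$ carry those of whichever $\Theta$-graphs of $\Gamma_2$ happen to occupy the same label positions; since the cyclic orders differ, these are in general non-isomorphic, the multisets of jester hats differ, and the homotopy equivalence fails. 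The paper pins this down explicitly: in $\widetilde{S_1}$ the labels are split into the arcs $\{1,\dots,N-1\}$ and $\{N\}$, so the four-cycles are labeled $(N-1,N)$ and $(N,1)$ and carry the jester hats of the two $\Theta$-graphs of $\Gamma_1$ whose orbifolds are attached along the star edge labeled $N$; one then locates $\Theta'_j,\Theta'_k\subset\Gamma_2$ isomorphic to these and builds $\widetilde{S_2}$ from the arc partition determined by $j$ and $k$ (edges labeled $j+1,\dots,k$ between one pair of vertex pairs, the complementary labels between the other), so that its four-cycles are labeled $(j,j+1)$ and $(k,k+1)$ and carry exactly the same jester hats. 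Any completed version of your sketch must include this choice.

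Your non-homeomorphism step is also heavier than necessary, and it is where you yourself flag possible failure. Once the covers are built as above, the paper distinguishes them directly from the underlying graphs: in $\widetilde{S_1}$ consecutive vertices are joined alternately by $N-1$ edges and by a single edge, whereas in $\widetilde{S_2}$ they are joined by $k-j$ and $N-(k-j)$ edges, so the singular sets, and hence the orbicomplexes, are not homeomorphic; no cut-pair analysis of jester-hat multisets and no passage to higher-degree covers is needed. Your worry about residual dihedral symmetry does correspond to a real degenerate case (when the isomorphic copies $\Theta'_j,\Theta'_k$ happen to be adjacent in $\Gamma_2$, i.e. $k-j\in\{1,N-1\}$, the unlabeled graphs coincide), and the paper's proof does not treat it separately either; but in the generic situation the paper's direct comparison suffices, so this should be framed as a refinement rather than the core of the argument.
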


\noindent 
\textit{Proof.} Note that $\mathcal{D}_{\Gamma_1}$ and $\mathcal{D}_{\Gamma_2}$ have the same number of generalized $\Theta$ graphs glued together, so the two-sheeted covers of their singular sets will be two isomorphic generalized $\Theta$ graphs $S_1$ and $S_2$. We will now construct two non-homeomorphic double covers of $S_1$ and $S_2$, which we will call $\widetilde{S_1}$ and $\widetilde{S_2}$. 

Since $\Gamma_1$ and $\Gamma_2$ are not isomorphic, $N > 3$, where $N$ as before denotes the number of generalized $\Theta$ graphs glued together in $\Gamma_1$ and $\Gamma_2$. Indeed, suppose that $\Gamma_1$ and $\Gamma_2$ are cycles of three generalized $\Theta$ graphs, $\Theta_1$, $\Theta_2$, and $\Theta_3$. Without loss of generality, suppose that in $\Gamma_1$, $\Theta_i$ has essential vertices $v_i$ and $v_{i + 1}$ and in $\Gamma_2$, $\Theta_1$ has essential vertices $v'_1$ and $v'_2$, $\Theta_2$ has essential vertices $v'_1$ and $v'_3$, and $\Theta_3$ has essential vertices $v'_2$ and $v'_3$. Then there is a graph isomorphism $f: \Gamma_1 \rightarrow \Gamma_2$, defined by $f(v_1) = v'_2$, $f(v_2) = v'_1$, and $f(v_3) = v'_3$ (with $f$ defined on the valence 2 vertices in the natural way). We point this out since the construction detailed below will not work for $N = 3$.

To construct $\widetilde{S_1}$, fix a cyclic ordering on a set of four vertices $\{\tilde{v}_1, \tilde{v}_2, \tilde{v}_3, \tilde{v}_4\}$. Construct edges between $\tilde{v}_1$ and $\tilde{v}_2$, and between $\tilde{v}_3$ and $\tilde{v}_4$, labeled with all integers $I$ such that $1 \leq I \leq N - 1$. Then construct one edge between $\tilde{v}_2$ and $\tilde{v}_3$, as well as $\tilde{v}_4$ and $\tilde{v}_1$ and label those edges with $N$.

According to the assumptions, for every generalized $\Theta$ graph $\Theta_i$ between vertices $v_i$ and $v_{i + 1}$ in $\Gamma_1$, there is an isomorphic generalized $\Theta$ graph $\Theta'_j$ in $\Gamma_2$ between $v'_j$ and $v'_{j + 1} \in \Gamma_2$. We can therefore assume there exists some $\Theta'_j \subset \Gamma_2$ that is isomorphic to $\Theta_1 \subset \Gamma_1$, and some $\Theta'_k \subset \Gamma_2$ that is isomorphic to $\Theta_N \subset \Gamma_1$. Without loss of generality, assume that $j < k$. Construct $\widetilde{S_2}$ with vertices $\{\tilde{v'}_1, \tilde{v'}_2, \tilde{v'}_3, \tilde{v'}_4\}$ and construct edges between $\tilde{v}_1$ and $\tilde{v'}_2$ as well as $\tilde{v'}_3$ and $\tilde{v'}_4$ labeled with all integers $I$ such that $j + 1 \leq I \leq k$. Then construct edges between $\tilde{v'}_2$ and $\tilde{v'}_3$ as well as $\tilde{v'}_4$ and $\tilde{v'}_1$ labeled with all integers $I$ such that $k + 1 \leq I \leq N$ or $1 \leq I \leq j$. 

In the Davis orbicomplexes, the isomorphic $\Theta$ graphs $\Theta_1 \subset \Gamma_1$ and $\Theta'_j \subset \Gamma_2$ give rise to identical sets of orbifolds glued to edges labeled $1$ and $2$ in $D_{\Gamma_1}$ and $j$ and $j + 1$ in $D_{\Gamma_2}$. As a result, 
if there is a cycle in $X_1$ labeled with $1$ and $2$ and a cycle of the same length in $X_2$ labeled with $j$ and $j + 1$, the sets of jester hats glued to them will be identical. The same holds for $\Theta_N \subset \Gamma_1$ and $\Theta'_k \subset \Gamma_2$. Note that in $\widetilde{S_1}$, there are two four-cycles with jester hats glued to them, one labeled with $1$ and $N$ and the other with $N$ and $N - 1$. The other cycles with jester hats glued to them are all two-cycles labeled with $I$ and $I + 1$ where $1 \leq I \leq N - 2$. On the other hand, $\widetilde{S_2}$ is a generalized $\Theta$ graph with two cycles of length four that have jester hats glued to them- one labeled with $k$ and $k + 1$ and the other with $j$ and $j + 1$. The other cycles with jester hats glued to them are labeled with $I$ and $I + 1$ where $I \neq j, k$ are two-cycles. Note that the regular neighborhood of both $\widetilde{S_1}$ and $\widetilde{S_2}$ is the $(2N - 2)$-holed sphere $S_{0, 2N - 2}$, and there is a bijective correspondence between sets of jester hats glued to boundary components of $S_{0, 2N - 2} \subset \widetilde{S_1}$ and sets of jester hats glued to boundary components of $S_{0, 2N - 2} \subset \widetilde{S_2}$. Thus, $X_1$ and $X_2$ are homotopic but not homeomorphic because $\widetilde{S_1}$ and $\widetilde{S_2}$ are not homeomorphic. For an example, refer to Figure \ref{fig:homotopic}.\\
\qed 

\begin{figure*}
    \centering
    \includegraphics[width=\textwidth]{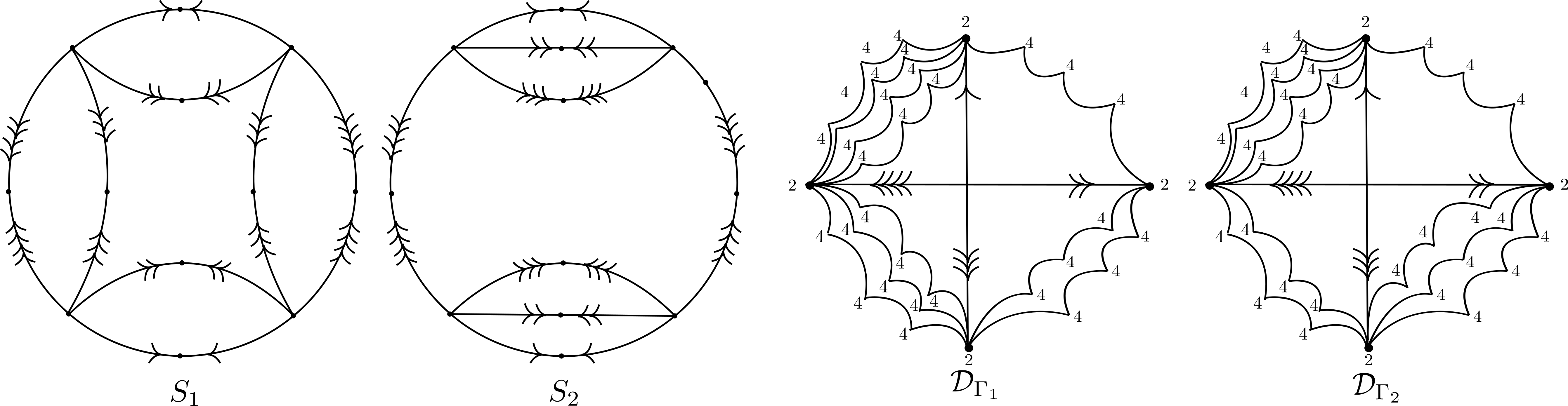}
    \caption{Here, the defining graphs $\Gamma_1$ and $\Gamma_2$ are the permuted pair from Figure \ref{fig:pp}, so they satisfy the conditions of Lemma \ref{samethetas}. The singular sets $S_i$ of $X_i$, which are four-sheeted covers of $\mathcal{D}_{\Gamma_i}$ for $i = 1, 2$ are shown on the left. Note that $X_1$ and $X_2$ are homotopic but not homeomorphic.}
    \label{fig:homotopic}
\end{figure*}

As a segue into our next section, we make the following remark. 

\begin{remark} Finding necessary and sufficient conditions for topological rigidity is a very nuanced task. In our setting, by Theorem \ref{main2}, we know a topologically rigid set cannot contain the finite-sheeted covers of $\mathcal{D}_{\Gamma_1}$ and $\mathcal{D}_{\Gamma_2}$, where $\Gamma_1, \Gamma_2$ are strongly repetitive or form a permuted pair (see Definition \ref{Cprime}). Unfortunately, 
 simply excluding finite-sheeted covers of all $\mathcal{D}_{\Gamma}$ where $\Gamma$ is repetitive and part of a permuted pair is not sufficient for constructing a topologically rigid set. For example, for $\mathcal{D}_{\Gamma_1}$ and $\mathcal{D}_{\Gamma_2}$ from Figure \ref{fig:Cdoubleprimeex}, $\Gamma_1$ and $\Gamma_2$ are not composed of the same set of generalized $\Theta$ graphs glued together. Furthermore, for both defining graphs, there do not exist pairs of commensurable Euler characteristic vectors of generalized $\Theta$ graphs in $\Gamma_1$ and $\Gamma_2$. Nevertheless, there exist eight- and four-sheeted covers of $\mathcal{D}_{\Gamma_1}$ and $\mathcal{D}_{\Gamma_2}$ respectively that are homotopic but not homeomorphic. 
\begin{figure}[h!]
    \centering
    \includegraphics[width=\textwidth]{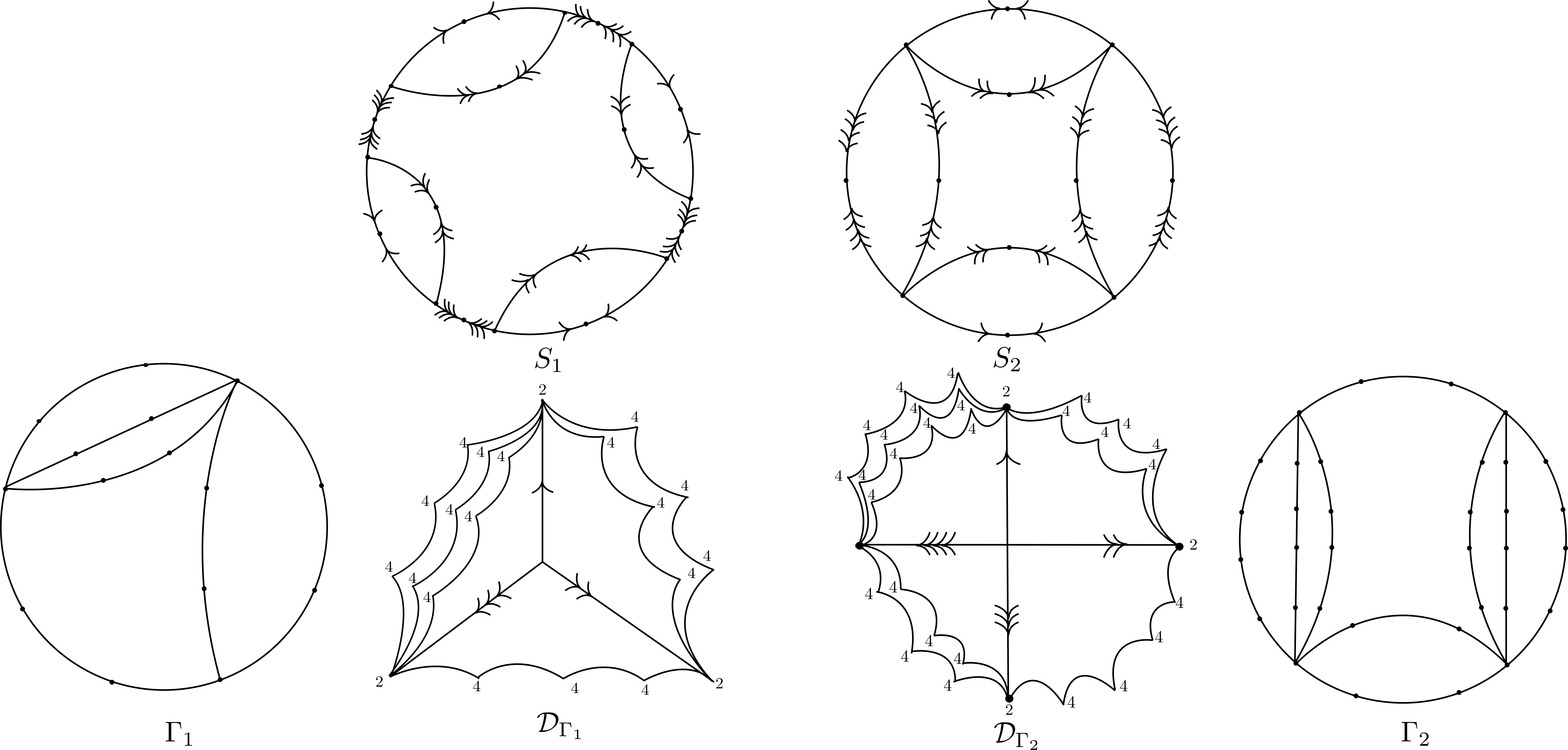}
    \caption{$S_1$ and $S_2$ are non-homeomorphic singular sets of homotopic eight- and four-sheeted covers of $\mathcal{D}_{\Gamma_1}$ and $\mathcal{D}_{\Gamma_2}$.
    Thus, neither $\Gamma_1$ nor $\Gamma_2$ is repetitive and $\Gamma_1$ and $\Gamma_2$ do not form a permuted pair, yet any set $\mathcal{X}''$ that contains all finite-sheeted covers of $\mathcal{D}_{\Gamma_1}$ and $\mathcal{D}_{\Gamma_2}$ is not topologically rigid. }
    \label{fig:Cdoubleprimeex}
\end{figure}
\end{remark} 

\section{A topologically rigid set}
\label{sec:4}

In this section, we introduce a class of finite-sheeted covers of Davis orbicomplexes that is topologically rigid. Since topological rigidity is difficult to achieve, many assumptions are necessary, so our class does not contain the complete set of finite-sheeted covers of Davis orbicomplexes. In particular, to find rigid classes, we not only need restrictions on the defining graphs but also on the singular sets of the covers.

A key tool in the proof of Theorem \ref{main} is Lafont's topological rigidity result from \cite{LaFont}. Lafont's result involves (simple, thick, 2-dimensional) \textit{hyperbolic P-manifolds} (see \cite{LaFont} Definition 2.3), which, roughly speaking, consist of compact surfaces with boundary identified along their boundaries. The gluing curves form the \textit{singular set} of the hyperbolic P-manifold. Additionally, there is the important restriction that the singular set of a hyperbolic P-manifold consists of disjoint unions of circles. We now restate the theorem we will use: 

\begin{theorem}[Lafont \cite{LaFont}, Theorem 1.2]
Let $X_1, X_2$ be a pair of simple, thick, 2-dimensional hyperbolic P-manifolds, and assume that $\phi: \pi_1(X_1) \rightarrow \pi_1(X_2)$ is an isomorphism. Then there exists a homeomorphism $\phi: X_1 \rightarrow X_2$ that induces $\phi$ on the level of the fundamental groups.
\label{Lafont} 
\end{theorem}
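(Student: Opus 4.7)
The plan is to extract the P-manifold decomposition of each $X_i$ from its fundamental group, apply surface-group rigidity on each piece, and then glue the resulting homeomorphisms along the singular set. (I read the statement with the obvious typo corrected, so $\phi: \pi_1(X_1) \to \pi_1(X_2)$.)

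First, I would recover the P-manifold structure group-theoretically. A simple, thick, 2-dimensional hyperbolic P-manifold admits a natural graph-of-groups decomposition: vertex groups are fundamental groups of the hyperbolic surface pieces (compact surfaces with boundary), and edge groups are infinite cyclic, coming from the singular circles along which the pieces are identified. Because each vertex group is a non-elementary hyperbolic surface group and each edge group is virtually cyclic with at least three adjacent vertex groups (by thickness), this splitting should be canonical in the JSJ sense: the singular circles correspond precisely to maximal cyclic subgroups whose limit set disconnects the Bowditch boundary into at least three components. Consequently, $\phi$ must send vertex subgroups to conjugates of vertex subgroups and edge subgroups to conjugates of edge subgroups, inducing an isomorphism of the underlying labelled graphs.

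Second, on each surface piece, I would invoke a Dehn--Nielsen--Baer-type theorem: an isomorphism between fundamental groups of compact hyperbolic surfaces with boundary that preserves the peripheral system is induced by a homeomorphism, unique up to isotopy. Since the edge groups of the splitting are exactly the peripheral subgroups of each piece, the induced vertex isomorphisms preserve peripheral structure, hence lift to homeomorphisms $f_v : \Sigma_v^{(1)} \to \Sigma_v^{(2)}$.

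The main obstacle is the gluing step. For each singular circle $C$, the homeomorphisms $f_v$ on the pieces meeting $C$ must agree on $C$, not merely up to isotopy. Each $f_v$ is defined only up to the mapping class group of a surface with boundary, which contains boundary-parallel Dehn twists, so we have some flexibility, but the twists on the different pieces meeting $C$ must be chosen coherently. My plan is: (i) for each singular circle $C$, fix once and for all a homeomorphism of $C$ realizing the induced isomorphism of edge groups; (ii) on each piece, adjust $f_v$ by an ambient isotopy supported in a collar of each boundary component so that the boundary trace matches the prescribed homeomorphism on every adjacent $C$. The hard part will be verifying that step (ii) is simultaneously achievable over all boundary components of all pieces — i.e.\ that the obstructions to coherent boundary matching vanish. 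I expect this to follow from the thickness hypothesis (which rigidifies the cyclic ordering of pieces around each $C$ via the local branching structure encoded in the splitting) together with the fact that a mapping class of a surface with boundary is determined by its action on the peripheral subgroups and on a generating set, so boundary-twist adjustments have no global cohomological obstruction on a P-manifold whose singular graph is tree-like in the appropriate sense.
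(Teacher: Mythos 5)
This statement is not proved in the paper at all: it is Lafont's Theorem 1.2, quoted verbatim from \cite{LaFont} and used as a black box in the proof of Lemma \ref{maps}. So there is no internal proof to compare your attempt against; the relevant comparison is with Lafont's own argument, and there your sketch has a genuine gap. The entire difficulty of the theorem is concentrated in your first step, which you assert rather than prove: that the graph-of-groups decomposition coming from the P-manifold structure is canonical and therefore preserved by an arbitrary abstract isomorphism $\phi$. The vertex groups here are fundamental groups of compact surfaces with boundary, i.e.\ finitely generated free groups, which admit a huge variety of splittings over $\mathbb{Z}$; nothing about ``non-elementary surface group'' forces the splitting to be unique, and your appeal to a Dehn--Nielsen--Baer-type realization theorem only works once you already know that $\phi$ carries peripheral (edge) subgroups to peripheral subgroups. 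Establishing exactly this --- that the subgroups corresponding to singular circles and to chambers are characterized intrinsically, via the local cut-point/branching structure of the Gromov boundary, and hence matched up by any isomorphism --- is the technical heart of Lafont's paper (this is what ``diagram rigidity'' means), and your one-sentence gesture toward limit sets separating the Bowditch boundary into three pieces is a statement of what must be shown, not a proof of it. Without it, the argument does not get off the ground: the present paper itself exhibits (Theorems \ref{main1} and \ref{main2}) homotopy equivalent surface-amalgam-like spaces that are not homeomorphic, so some structural input beyond ``isomorphic $\pi_1$ plus pieces are surfaces'' is indispensable.

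By contrast, the step you flag as the ``main obstacle'' --- gluing the chamber homeomorphisms along the singular circles --- is comparatively routine. Once each chamber homeomorphism restricts on a given boundary circle to a map homotopic to the fixed circle homeomorphism realizing the edge-group isomorphism, you can adjust it by an isotopy supported in a collar of that boundary component, independently on each component, so there is no global coherence or cohomological obstruction to cancel, and no ``tree-like'' hypothesis on the singular incidence graph is available or needed (thickness only says at least three chambers meet each singular circle). In short: your outline has the right overall shape (recognize the decomposition, realize it piecewise, glue), which is indeed the shape of Lafont's proof, but the step you treat as obvious is the theorem, and the step you treat as the theorem is essentially bookkeeping.
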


In order to use Theorem \ref{Lafont}, we impose a restriction on the finite covers of Davis orbicomplexes we are examining, which we state below. 

\begin{assumption}
$X$ is homotopic to an orbicomplex $Y$ that consists of jester hats glued along their boundaries to the boundaries of an $h$-holed genus $g$ surface $S_{g, h}$. Furthermore, in $Y$, each boundary of $S_{g, h}$ has at least one jester hat glued to it.

\label{ass1}
\end{assumption}

Recall that a graph $\Gamma$ is said to be \emph{3-convex} if every edge between its essential vertices has at least 3 subdivisions. Note that if a cycle of generalized $\Theta$ graphs $\Gamma$ is 3-convex, then $W_{\Gamma}$ is hyperbolic since $\Gamma$ is square-free. Although the converse is not true, we impose the 3-convexity condition in our proof to ensure our construction of hyperbolic P-manifold lifts of Davis orbicomplexes will work. 

We now use Assumption \ref{ass1} to prove a lemma that will be important in the proof of our main result of the section. 
\begin{lemma}
\label{maps}
Let $X_1$ and $X_2$ be finite covers of Davis orbicomplexes $D_{\Gamma_1}$ and $D_{\Gamma_2}$, where $\Gamma_1$ and $\Gamma_2$ are 3-convex and satisfy Assumption \ref{ass1}, and suppose $\pi_1(X_1) \cong \pi_1(X_2)$. Then the isomorphism $f: \pi_1(X_1) \rightarrow \pi_1(X_2)$ induces a bijection $f_{\ast}$ between jester hats of $X_1$ and $X_2$ and for a jester hat $\mathcal{O}_1 \subset X_1$, if $f_{\ast}(\mathcal{O}_1) = \mathcal{O}_2 \subset X_2$, then $\mathcal{O}_1$ and $\mathcal{O}_2$ are homeomorphic. Furthermore, if $S_1$ and $S_2$ are singular subsets of $X_1$ and $X_2$ respectively, if $\gamma_1 \subset S_1$ is the boundary component of $\mathcal{O}_1$, then $f_{\ast}(\gamma_1) = \gamma_2$ where $\gamma_2$ is the boundary component of $\mathcal{O}_2$. 
\end{lemma}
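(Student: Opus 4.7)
The plan is to promote the visible decomposition of $X_i$ (equivalently, of the homotopy equivalent orbicomplex $Y_i$ from Assumption \ref{ass1}) to a canonical algebraic splitting of $\pi_1(X_i)$, and then transfer this splitting along $f$. First I would record the natural graph-of-groups decomposition $\mathbb{G}_i$ arising from $Y_i$: a central vertex group $\pi_1(S_{g_i,h_i})$ (a free or surface group, in particular torsion-free), one leaf vertex group $\pi_1^{\mathrm{orb}}(\mathcal{O})$ for each jester hat $\mathcal{O}$, and infinite cyclic edge groups generated by the boundary circles along which the jester hats are glued. For a jester hat with $n$ cone points, the leaf vertex group has presentation $\langle r_1,\ldots,r_n \mid r_i^2=1,\; r_1 r_2 \cdots r_n=1\rangle$, an orbifold group of Euler characteristic $1-n/2$.

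Since $\Gamma_i$ is $3$-convex and square-free, $W_{\Gamma_i}$ is word-hyperbolic and one-ended, and hence so is the finite-index subgroup $\pi_1(X_i)$. I would next argue that $\mathbb{G}_i$ agrees with the canonical JSJ decomposition of $\pi_1(X_i)$ over virtually cyclic subgroups (in the sense of Bowditch for hyperbolic groups, extended to the torsion setting by Dahmani--Guirardel). The features that pin $\mathbb{G}_i$ down algebraically are: the central vertex group is torsion-free, so every $2$-torsion element of $\pi_1(X_i)$ is conjugate into some leaf; the leaf vertex groups are the maximal subgroups generated by $2$-torsion; and the edge groups are precisely the maximal cyclic peripheral subgroups of the leaves. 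By canonicity of the JSJ decomposition, the isomorphism $f$ sends $\mathbb{G}_1$ to $\mathbb{G}_2$ up to conjugation, inducing an isomorphism of underlying graphs of groups, and in particular the bijection $f_\ast$ between jester hat leaves.

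To finish, I would read off the homeomorphism and boundary claims from this graph-of-groups isomorphism. Under $f_\ast$, a jester hat $\mathcal{O}_1$ with $n$ cone points is sent to a leaf whose vertex group is isomorphic to that of $\mathcal{O}_1$, and in particular has orbifold Euler characteristic $1-n/2$; since $D^2(2,\ldots,2)$ with $n$ cone points is determined up to homeomorphism by $n$, the image $\mathcal{O}_2 = f_\ast(\mathcal{O}_1)$ is homeomorphic to $\mathcal{O}_1$. Moreover, each leaf in $\mathbb{G}_i$ is incident to a unique edge which corresponds to the boundary circle $\gamma_i \subset S_i$ of that leaf's jester hat; the graph-of-groups isomorphism respects incidence, so $f_\ast(\gamma_1) = \gamma_2$.

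The hard step will be the canonicity in the second paragraph: making precise that the visible orbicomplex splitting $\mathbb{G}_i$ coincides with a canonical algebraic invariant of $\pi_1(X_i)$. One must verify that no spurious further splittings inside the central surface vertex confuse the leaf matching, handle the $2$-torsion in the leaf groups carefully (they are not hanging Fuchsian in the torsion-free sense), and ensure that terminal edges of $\mathbb{G}_i$ are not algebraically conflated with one another. The $3$-convexity hypothesis is what supplies enough geometric rigidity for the Bowditch/Dahmani--Guirardel JSJ machinery to apply cleanly.
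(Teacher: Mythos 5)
Your route is genuinely different from the paper's: the paper never works with the canonical splitting of the orbifold group directly. Instead it builds a tower of covers (following Stark's Proposition 3.2) to replace $X_1,X_2$ by torsion-free finite covers $\widehat{X_i}$ that are homotopy equivalent to simple, thick, $2$-dimensional hyperbolic P-manifolds, invokes Lafont's rigidity (Theorem \ref{Lafont} and Corollary 3.5 of \cite{LaFont}) to get the chamber-by-chamber correspondence there, and then descends to $X_1,X_2$ via homotopy lifting (or, in the alternative argument, via asphericity and Whitehead's theorem). In other words, the paper outsources exactly the step you call ``the hard step'' to Lafont's theorem after arranging to be in a torsion-free setting where that theorem applies; your plan proposes to re-prove that rigidity directly for the orbifold groups via Bowditch/Dahmani--Guirardel JSJ theory. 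That is a legitimate strategy in principle, but as written it has concrete gaps, and the part you defer is the entire content of the lemma.

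Two specific problems. First, your algebraic characterization of the leaves is wrong: the leaf groups are not ``the maximal subgroups generated by $2$-torsion.'' A jester hat group is $\mathbb{Z}_2 * \cdots * \mathbb{Z}_2$ (note: \emph{without} the relation $r_1\cdots r_n=1$ -- your presentation is the group of a sphere with $n$ cone points and would kill the boundary element, contradicting your own claim that the edge groups are infinite cyclic generated by the boundary circles). If two or more jester hats are glued to the same singular circle -- which is the typical, ``thick'' situation for covers of Davis orbicomplexes -- then the subgroup they generate is an amalgam of free products of $\mathbb{Z}_2$'s over the boundary cyclic group, which is again generated by $2$-torsion and strictly contains each leaf, so maximality fails. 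Second, the visible graph of groups $\mathbb{G}_i$ is not literally the canonical JSJ: in Bowditch's decomposition the singular circles should appear as two-ended vertex groups, both the central surface group and the jester hat groups are hanging-Fuchsian-type (QH) vertices, and one must rule out their being absorbed into larger QH vertices. This is exactly where thickness (at least three $2$-dimensional sheets along each singular circle) is needed; Assumption \ref{ass1} only guarantees \emph{one} jester hat per boundary circle of $S_{g,h}$, and along a circle with only one jester hat and the surface there is no canonical splitting at all, so the matching of pieces cannot be read off from a canonical decomposition without further input from the structure of the covers. Until you supply the identification of $\mathbb{G}_i$ (or a canonical refinement of it) as an isomorphism invariant -- which is precisely what Lafont's theorem provides in the paper's torsion-free reduction -- the bijection $f_\ast$ and the boundary statement are not established.
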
 

\begin{proof} Suppose $X_1$ and $X_2$ are two orbicomplexes with 3-convex defining graphs where $\pi_1(X_1) \cong \pi_1(X_2)$. We first use the construction from Proposition 3.2 of \cite{Stark}: let $X_i$ be a finite cover of $\mathcal{D}_{{\Gamma}_i}$ for $i = 1, 2$. Each jester hat in $X_i$ with $p$ cone points lifts to a orbifold with $2(p - 2)$ cone points and two boundary components, which in turn has a two-sheeted cover $S_{g, 4}$, where $g = \frac{2(p - 2)}{2} - 1$. Then, we glue each boundary component of $S_{g, 4}$ to a copy of $S_i$, the singular set of $X_i$, to obtain a torsion-free four-sheeted cover. We will call these torsion-free covers $\widehat{X_1}$ and $\widehat{X_2}$. See Figure \ref{fig:tower} for an illustration of the construction.

Since abstract commensurability is an equivalence relation, $\pi_1(\widehat{X_1})$ and $\pi_1(\widehat{X_2})$ are abstractly commensurable, so there exist finite-sheeted covers $\mathcal{Y}_1$ and $\mathcal{Y}_2$ such that $$\pi_1(\widehat{X_1}) \geq \pi_1(\mathcal{Y}_1) \cong \pi_1(\mathcal{Y}_2) \leq \pi_1(\widehat{X_2}).$$ Note that $\widehat{X_1}$ and $\widehat{X_2}$ are homotopic to hyperbolic P-manifolds; take the regular neighborhoods of their singular sets. As a result, finite-sheeted covers of $\widehat{X_1}$ and $\widehat{X_2}$ are also homotopic to hyperbolic P-manifolds by Nielsen-Schreier, so it follows that $\mathcal{Y}_1$ and $\mathcal{Y}_2$ are homotopic to hyperbolic P-manifolds $Y_1$ and $Y_2$. By Corollary 3.5 of \cite{LaFont}, a homotopy $\phi$ between hyperbolic P-manifolds induces a bijection between homeomorphic chambers (hyperbolic manifolds with boundary) of $Y_1$ and $Y_2$. Additionally, for a chamber $C_1 \subset Y_1$ with boundary component $\gamma_1$, if $\phi(C_1) = C_2 \subset Y_2$, then $\phi(\gamma_1) = \gamma_2$ where $\gamma_i$ is a boundary component of $C_i$ for $i = 1, 2$. 
Using these results, we can conclude that surfaces with boundary in $\mathcal{Y}_1$ are mapped bijectively, and homeomorphically, to surfaces in $\mathcal{Y}_2$, as the maps are preserved under homotopy. The homotopy lifting property then gives us the statement of the lemma. 

Alternatively, observe that the Davis complex $\Sigma_{\Gamma}$, the universal cover of $\mathcal{D}_{\Gamma}$, is CAT(0) and thus contractible. Since $W_{\Gamma}$ acts freely on $\Sigma_{\Gamma}$, it follows that $\mathcal{D}_{\Gamma}$ is a classifying space of $W_{\Gamma}$, or equivalently a $K(W_{\Gamma}, 1)$ space. Furthermore, finite covers of $\mathcal{D}_{\Gamma}$ are quotients of $\Sigma_{\Gamma}$ by a free action as well, so $X_1$ and $X_2$ are classifying spaces for the same finite-index subgroup of $W_{\Gamma}$. As a result, since $\pi_1(X_1) \cong \pi_1(X_2)$, $X_1$ and $X_2$ are homotopy equivalent by Whitehead's Theorem. This allows us to construct a shorter tower of covers since a homotopy between $X_1$ and $X_2$ induces a homotopy between $\widehat{X_1}$ and $\widehat{X_2}$, which are homotopic to hyperbolic P-manifolds. We can then apply Theorem \ref{Lafont} to obtain our result.  
\qed 
\end{proof}

\begin{figure*}
    \centering
    \includegraphics[width=0.95\textwidth]{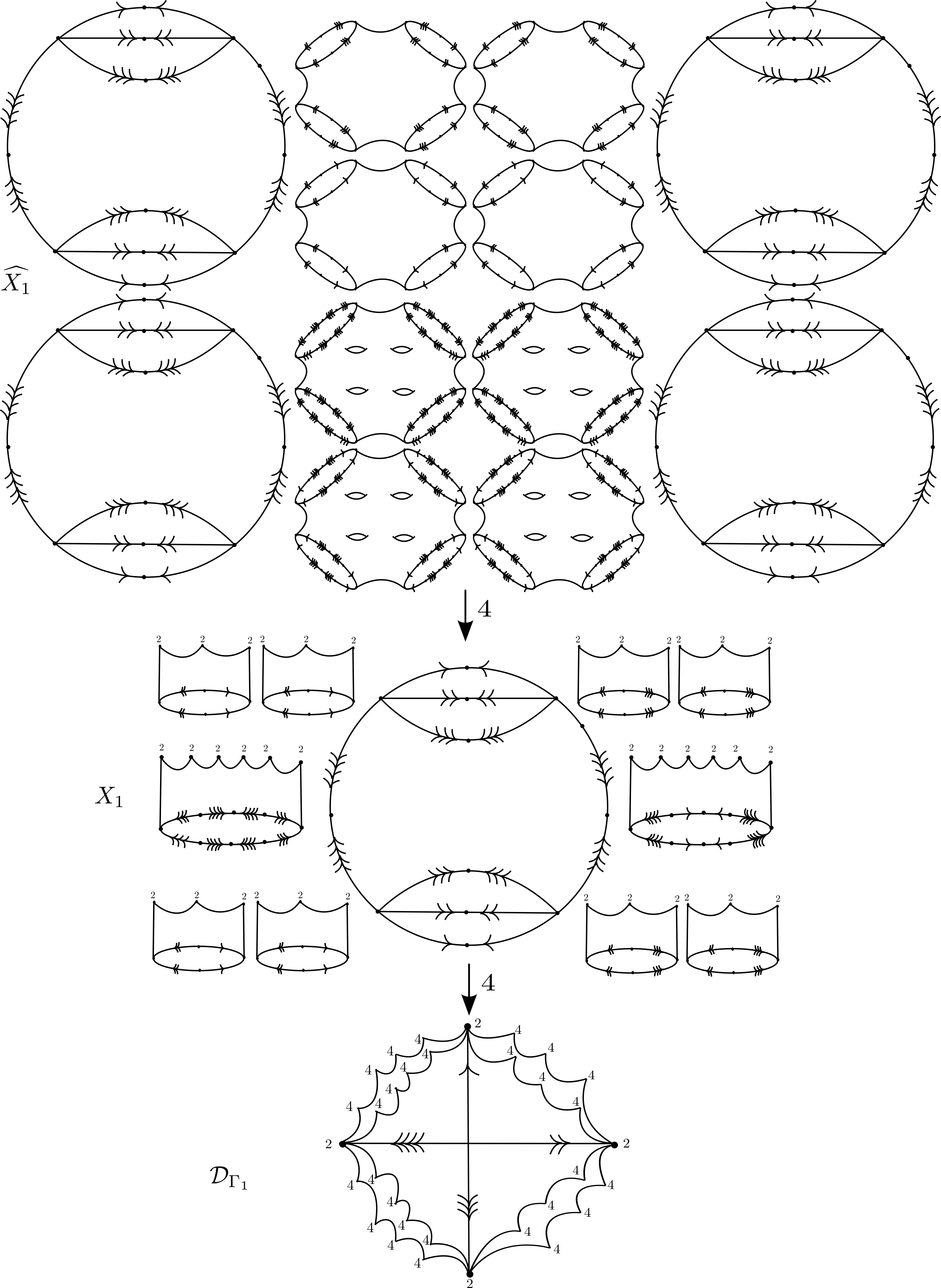}
    \caption{A tower of covers constructed in the proof of Lemma \ref{maps}. Note that $\widehat{X_1}$ is homotopic to a hyperbolic P-manifold.}
    \label{fig:tower}
\end{figure*}

We stress that Assumption \ref{ass1} is key for the proof of Lemma \ref{maps}. For example, let $S_{g, n}$ be a genus $g$ surface with $n$ boundary components. Recall the \textit{graph genus} of a graph $G$ is the minimal genus of an orientable surface into which $G$ can be embedded. In general, a cover of a Davis orbicomplex $X$ is homotopic to an orbicomplex consisting of jester hats identified along their boundaries to a set of simple closed curves $C$ on $S_{g, n}$ since every graph has a genus (see \cite{white}). Analyzing $X$ can be difficult since there is no guarantee of the four-sheeted hyperbolic P-manifold cover constructed in Lemma \ref{maps}, as the lifts of $C$ may not be a disjoint union of circles. For example, consider Figure \ref{fig:K33}, which depicts a six-sheeted cover of a Davis orbicomplex $\mathcal{D}_{\Gamma}$. The singular set of $X$ is the complete bipartite graph $K_{3, 3}$, which is homotopic to $S_{1, 3}$. Since the jester hats are not identified along disjoint circles, Lafont's rigidity result is not available for use and the proof for Lemma \ref{maps} does not work.

\begin{figure*}[h!]
    \centering
    \includegraphics[width=\textwidth]{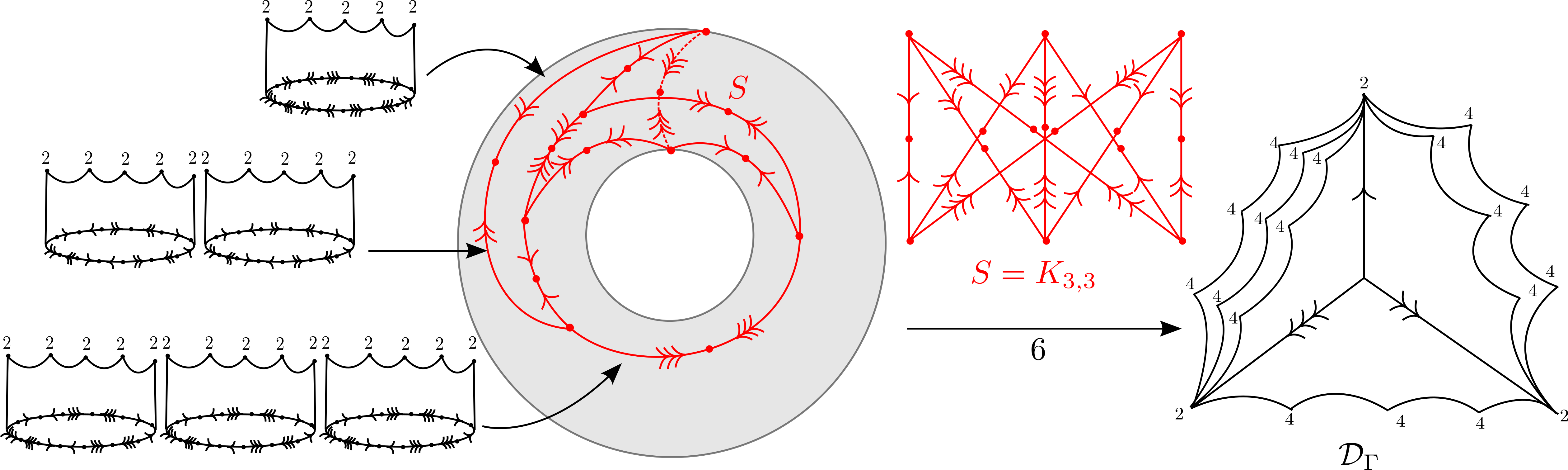}
    \caption{An example of a finite-sheeted cover of a Davis orbicomplex $\mathcal{D}_{\Gamma}$ that does not satisfy Assumption \ref{ass1}. The singular set $S$ is not necessarily planar; in this example, $S$ (depicted in red) embeds on a torus.}
    \label{fig:K33}
\end{figure*}

In order to determine whether two finite-sheeted covers of Davis orbicomplexes are homeomorphic, which we need to do to determine topological rigidity, we need to check that their singular sets are homeomorphic. Unfortunately, since finite covers of the singular sets are graphs, determining topological rigidity therefore requires solving a graph isomorphism problem, which has a high computational complexity. Recall that a \textit{complete graph invariant} is combinatorial tool for determining whether a pair of graphs in a family of graphs is isomorphic. To simplify our problem, we will define a family of singular sets $\mathcal{S}$ of finite-sheeted covers of a Davis complex $\mathcal{D}_{\Gamma}$ with an easily computable complete graph invariant, which we now introduce.   

\begin{definition}[Cycle count vectors]
\label{vecs} 
Consider $X$, a $2d$-sheeted cover of a Davis orbicomplex, where $d > 0$ is any arbitrary integer. Recall that for a cycle of generalized $\Theta$ graphs $\Gamma$, its associated Davis orbicomplex $\mathcal{D}_{\Gamma}$ has a singular set that is a star graph with $N$ edges, which we will label with integers $i = 1, 2, ..., N$. If an edge $e = [v, w]$ is labeled by $i$, we will write $e = [v, w]_i$. Fix a cyclic labeling of the edges, which will lift to a labeling in any cover of the singular set. For $1 \leq i \leq N$, let $x_i = (x_{i, 1}, x_{i, 2}, ..., x_{i, d})$ be a vector where $x_{i, j}$ is the number of cycles in $S$ of length $2j$ that are labeled with $i$ and $i + 1$. Recall that as usual, we are counting the edges between two essential vertices as one edge. Note that $x_i$ is a vector of length $d$ since the possible cycle lengths of a $2d$-sheeted cover will range from $2$ to $2d$. 
For an example, refer to Figure \ref{fig:homotopic}. Let the labeling of an edge of $S_i$ be the number of arrows seen on the edge, so $N = 3$. For $S_1$, the set of cycle count vectors is $x_1 = (2, 0)$, $x_2 = (0, 1)$, $x_3 = (2, 0)$, and $x_4 = (0, 1)$ since there are two 2-cycles labeled with $1$ and $2$, two 2-cycles labeled with $3$ and $4$, one 4-cycle labeled with $2$ and $3$, and one 4-cycle labeled with $4$ and $1$. Similarly, for $S_2$, the set of cycle count vectors is $x_1 = (2, 0)$, $x_2 = (2, 0)$, $x_3 = (0, 1)$ and $x_4 = (0, 1)$. As we will see soon, since the cycle count vectors are different, $S_1$ and $S_2$ are not isomorphic.  
\end{definition}

We now define a family of singular sets of finite-sheeted covers of a Davis complex $\mathcal{D}_{\Gamma}$. Recall that the double cover of the singular set of $\mathcal{D}_{\Gamma}$ is itself a generalized $\Theta$ graph $\Theta_N$ with $N$ branches, where $N$ is the number of generalized $\Theta$ graphs in the defining graph $\Gamma$ (see Construction \ref{double}). A double cover of $\Theta_N$ is a cycle of four (possibly trivial) generalized $\Theta$ graphs $S'$ with valence $N$ vertices. Note there exists some $a, b \in \mathbb{Z}_{\geq 0}$ and $a + b = N$ such that adjacent essential vertices of $S'$ either have $a$ or $b$ branches between them. For example, in Figure \ref{fig:homotopic}, all the essential vertices in $S_1$ and $S_2$ have valence $N = 4$ since the original defining graph consisted of four generalized $\Theta$ graphs glued together. In $S_1$, $a = 2$ and $b = 2$, and in $S_2$, $a = 1$ and $b = 3$. 

\begin{construction} [A special class of singular sets $\mathcal{S}$] To begin our construction, take $S'$, a four-sheeted cover of the singular set of $\mathcal{D}_{\Gamma}$, which is a cycle of four generalized $\Theta$ graphs each with either $a$ or $b$ branches. Then arbitrarily choose two adjacent vertices of valence greater than two, $v_i$ and $v_{i + 1}$ with $n_i = a$ or $b$ edges between them. Delete some fixed number of (subdivided) edges $j$ between them, where $1 \leq j \leq n_i$, add two essential vertices $u_i$ and $u_{i + 1}$, and add $j$ edges between $v_i$ and $u_i$ as well as $v_{i + 1}$ and $u_{i + 1}$. Finally, add $N - j$ edges between $u_i$ and $u_{i + 1}$. Then arbitrarily choose two other adjacent essential vertices and repeat the process any finite number of times. See Figure \ref{fig:Sexample} for an example of an element of $\mathcal{S}$; at each step, two edges are added between the new essential vertices. 

\begin{figure}[h]
    \centering
    \includegraphics[width=\textwidth]{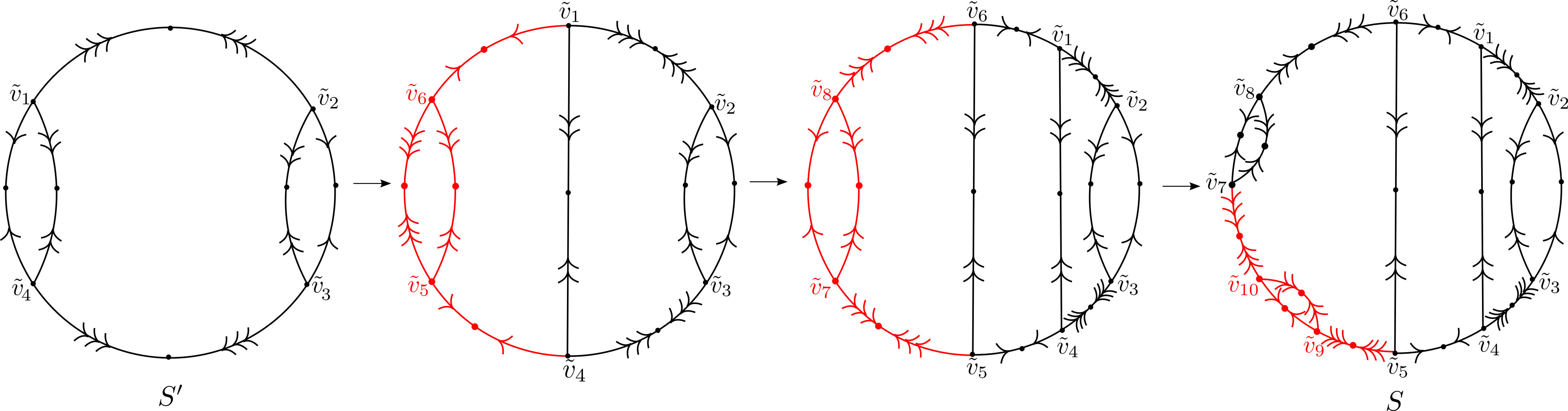}
    \caption{Construction of a $S \in \mathcal{S}$ from a cycle of generalized $\Theta$ graphs. The edges and vertices added at each step are depicted in red.}
    \label{fig:Sexample}
\end{figure}
\label{singularset}
\end{construction}

Let $S \in \mathcal{S}$ be a graph that can be constructed from the process described above. By construction, $S$ covers the original singular set of the Davis orbicomplex. Notice $\mathcal{S}$ describes singular sets, not defining graphs, so the graphs in $\mathcal{S}$ are not restricted to cycles of generalized $\Theta$ graphs. We now introduce a second assumption: 

\begin{assumption}
    \label{ass2} 
    The singular set $S$ of $X$ is an element of the set $\mathcal{S}$ defined in Construction \ref{singularset}.
\end{assumption}

We are now ready to introduce a topologically rigid class finite covers of Davis orbicomplexes.

\begin{theorem} 
\label{main} Suppose $\Gamma$ is 3-convex and not repetitive (see Definition \ref{C}). Let $\mathcal{X}''$ contain all finite-sheeted covers of $\mathcal{D}_{\Gamma}$ that satisfy Assumptions \ref{ass1} and \ref{ass2}. Then $\mathcal{X}''$ is topologically rigid.  
\end{theorem} 

We now give an outline of the proof of Theorem \ref{main} (for the actual proof, see the end of the section). We first show that the statement of the theorem reduces to a graph isomorphism problem on singular sets of two finite covers of Davis orbicomplexes (see Lemma \ref{cor}). We then show that if two finite covers of Davis orbicomplexes satisfying the conditions listed in Theorem \ref{main} are homotopic, then they have the same cycle count vectors. Finally, in Lemma \ref{b}, we show that cycle count vectors are a complete graph invariant for singular sets in $\mathcal{S}$ detailed in Construction \ref{singularset}.   

 \begin{remark} 
 Recall that if $X_1$ and $X_2$ are homeomorphic, then $W_{\Gamma_1}$ and $W_{\Gamma_2}$ are commensurable. Given that Assumption \ref{ass1} is true, the converse is very much false. Figure 1.2 of \cite{DST} gives some examples of pairs of defining graphs $\{\Gamma_i, \Gamma'_i\}$ ($i = 1, 2, 3$) of commensurable RACGs, which we will now reference. We can check that none of the finite-sheeted covers of $\mathcal{D}_{\Gamma_i}$ and $\mathcal{D}_{\Gamma_i'}$ can be homotopic by Lemma \ref{maps}, even though the RACGs $W_{\Gamma_i}$ and $W_{\Gamma_i'}$ are commensurable. To see the full commensurability classification for cycles of generalized $\Theta$ graphs, refer to Theorem 1.12 of \cite{DST}. 
\end{remark} 

\begin{definition} We say that a graph homeomorphism $\bar{f}: G_1 \rightarrow G_2$ is \textit{label-preserving} if for all $[v, w]_i \in G_1$, $\bar{f}([v, w]_i) \subset G_2$ is also labeled by $i$. \end{definition} 

We now prove the following useful result involving singular sets of finite covers of Davis orbicomplexes.

\begin{lemma}
Let $X_1$ and $X_2$ satisfy the assumptions from Lemma \ref{maps}. Then every label-preserving graph homeomorphism $\bar{f}: S_1 \rightarrow S_2$ between the singular subsets of $X_1$ and $X_2$ will induce a homeomorphism $f: X_1 \rightarrow X_2$.
\label{cor}
\end{lemma} 

\begin{proof} Suppose the singular sets $S_1 \subset X_1$ and $S_2 \subset X_2$ are homeomorphic. Then the vertices $v_j \in V(S_1)$ with valence greater than two map bijectively to the vertices $v'_j \in V(S_2)$ with the same valence, and if $\bar{f}(v_1) = v_1'$ and $\bar{f}(v_2) = v_2'$, then for every edge $[v_1, v_2]_i$ labeled with $i$, $\bar{f}([v_1, v_2]_i) = [v_1', v_2']_i$. As a result, every cycle $\gamma_1 \subset S$ labeled with $i$ and $i + 1$ ($\mod N$) is bijectively mapped to a cycle of the same length in $\gamma_2 \subset S_2$ labeled with $i$ and $i + 1$ ($\mod N$). By Lemma \ref{maps}, for every jester hat $\mathcal{O}_1$ with $c$ cone points glued to $\gamma_1$, there must also be a jester hat $\mathcal{O}_2$ with $c$ cone points glued to $\gamma_2 = \bar{f}(\gamma_1)$. As a result, $\bar{f}$ induces a homeomorphism $f: X_1 \rightarrow X_2$ where $f(S_1) = S_2$ and $f(\mathcal{O}_1) = \mathcal{O}_2$.
\qed 
\end{proof} 

Recall that a graph $G$ with genus $g$ can be embedded into a genus $g$ surface $S_g$. The edges of the graph will divide $S_g$ into regions called \textit{faces}. Let $\lvert V\rvert$ denote the number of vertices of $G$, $\lvert E \rvert$ the number of edges, and $\lvert F \rvert$ the number of faces. For planar graphs, we can calculate the number of faces using Euler's formula, $2 = \lvert V \rvert - \lvert E \rvert + \lvert F \rvert$. In general, using the definition of Euler characteristic for simplicial complexes, for a graph with genus $g$, $2 - 2g = \lvert V \rvert - \lvert E \rvert + \lvert F \rvert$. 

\begin{lemma}
\label{a}
Let $\mathcal{X}''$ be the set of finite-sheeted covers of a single Davis orbicomplex $\mathcal{D}_{\Gamma}$, where $\Gamma$ is 3-convex and not repetitive. Suppose $X_1, X_2 \in \mathcal{X}''$ are homotopic, and their singular sets $S_1$ and $S_2$ satisfy the conditions listed in Theorem \ref{main}. Then $x_i = x'_i$ for all $1 \leq i \leq N$, where $x_i$ and $x_i'$ are the cycle count vectors of $X_1$ and $X_2$ respectively.   
\end{lemma}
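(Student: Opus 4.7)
The plan is to use the homotopy $X_1 \simeq X_2$ to transport cycles of the singular set between $S_1$ and $S_2$ in a label-preserving way, so that the cycle counts on each side must agree.

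First I would invoke Lemma \ref{maps}: from $X_1 \simeq X_2$ it supplies a bijection $f_\ast$ between the jester hats of $X_1$ and $X_2$ that preserves the number of cone points and sends the boundary circle of each jester hat to the boundary circle of its image. Under Assumption \ref{ass1}, every cycle of $S_i$ bounds at least one jester hat, and because jester hats sharing a boundary in $X_1$ must map under $f_\ast$ to jester hats sharing a boundary in $X_2$, this descends to a well-defined bijection between the cycles of $S_1$ and the cycles of $S_2$.

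To each cycle $\gamma \subset S_i$ I would attach its \emph{signature}: the multiset of cone-point counts of the jester hats glued to $\gamma$. Applying Lemma \ref{jh} to each of the $l_i$ branches of $\Theta_i$, a cycle of length $2j$ labelled by $i$ and $i+1$ has signature
\[
\operatorname{sig}(i,j) \;=\; \bigl\{\, j(r_{i,b}-3)+2 \,:\, 1 \le b \le l_i\,\bigr\},
\]
exactly as in the cone-point count formulas that appear in the proof of Theorem \ref{main1}. Since $f_\ast$ preserves jester hats up to homeomorphism and respects their boundary cycles, the induced cycle bijection preserves signatures.

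The heart of the argument, and the step I expect to be the main obstacle, is showing that $\operatorname{sig}(i,j)$ uniquely determines the pair $(i,j)$; this is where both hypotheses on $\Gamma$ enter essentially. Assuming $\operatorname{sig}(i,j) = \operatorname{sig}(i',j')$, equality of multisets forces $l_i = l_{i'}$ and, after sorting both tuples, $j \cdot v(\Theta_i) = j' \cdot v(\Theta_{i'})$, where $v(\Theta_i)$ denotes the Euler characteristic vector of $\Theta_i$ from Definition \ref{ECV}. Non-repetitiveness of $\Gamma$ rules out this commensurability relation whenever $i \neq i'$, forcing $i = i'$; then $3$-convexity yields $n_{i,b} \ge 3$ for every $b$, so $v(\Theta_i)$ has all entries negative and hence nonzero, and $j = j'$. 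With the signature characterization in hand, the cycle bijection $f_\ast$ restricts, for each $(i,j)$, to a bijection between cycles of label $\{i,i+1\}$ and length $2j$ in $S_1$ and in $S_2$; counting yields $x_{i,j} = x'_{i,j}$ for all $i,j$.
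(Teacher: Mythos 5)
Your proposal is correct and takes essentially the same route as the paper: both use Lemma \ref{maps} to obtain a cone-point-preserving bijection of jester hats respecting their boundary cycles, and then use non-repetitiveness of $\Gamma$ to force corresponding cycles to have the same label and length, hence equal cycle count vectors. The only organizational difference is that the paper first proves the two covers have equal degree by an Euler-characteristic count on the singular graphs (a fact your signature argument yields implicitly), while your explicit appeal to 3-convexity to rule out $j \neq j'$ spells out a step the paper leaves implicit.
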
 

\begin{proof} 
First, we show that if $\Gamma$ consists of $N$ essential vertices, and $X_i$ are $d_i$ sheeted covers of $\mathcal{D}_{\Gamma}$ for $i = 1, 2$, then $d_1 = d_2$ necessarily. As usual, we will denote $S_i$ to be the singular set of $X_i$. Since $X_1$ and $X_2$ are homotopic, by Lemma 4.3, they consist of the same sets of jester hats identified along their boundary components to some $F$-holed genus $g$ surface $S_{g, F}$. Note that $F$ is also the number of faces of both $S_1 \subset X_1$ and $S_2 \subset X_2$. Note the number of vertices in $S_i$ is $d_i$ and the number of edges is $\frac{d_iN}{2}$ for $i = 1, 2$, so using the definition of Euler characteristic, we have: 
$$d_1 - \frac{d_1N}{2} + F = 2 - 2g = d_2 - \frac{d_2N}{2} + F \implies d_1\big(2 - N\big) = d_2\big(2 - N\big).$$ Thus, $d_1 = d_2$ necessarily. Then $X_1$ and $X_2$ are finite sheeted covers of the same degree of the same Davis orbicomplex $\mathcal{D}_{\Gamma}$.


Let $\Theta = \Theta(n_1, n_2, ..., n_k)$ and $\Theta' = \Theta(n'_1, n'_2,...,n'_{k'})$ be two arbitrary $\Theta$ graphs in $\Gamma$. Recall that by Lemma \ref{jh}, the number of cone points $c_i$ of a jester hat corresponding to $b_i$, the $i$th branch of $\Theta$, is $\frac{d}{2}(r_i - 3) + 2$ where $d$ is the index of the cover the jester hat corresponds to. Similarly, the number of cone points $c'_j$ of a jester hat corresponding to $b'_j$, the $j$th branch of $\Theta'$, is $\frac{d}{2}(r'_j - 3) + 2$. Note that $\Gamma$ is not repetitive, so there do not exist $K, L$ such that $K(\frac{1 - n_1}{4}, \frac{1 - n_2}{4}, ..., \frac{1 - n_k}{4}) = L(\frac{1 - n'_1}{4}, \frac{1 - n'_2}{4}, ..., \frac{1 - n'_{k'}}{4})$. So in particular, $(n_1 - 1, n_2 - 1, ..., n_k - 1) \neq (n'_1 - 1, n'
_2 - 1, ..., n'_{k'} - 1)$ and since $n_i$ and $n'_j$ are equal to $r_i - 2$ and $r'_j - 2$ respectively, it follows that $\{\frac{d}{2}(r_i - 3) + 2\}_{1 \leq i \leq k} \neq \{\frac{d}{2}(r'_j - 3) + 2\}_{1 \leq j \leq k'}$. Thus, in any finite-sheeted cover of $D_{\Gamma}$, there are different sets of jester hats glued to cycles with different labels since the sets of cone point counts are different. In particular, jester hats in $X_1$ that are lifts of orbifolds corresponding to a $\Theta$ graph in $\Gamma$ must map to a collection of jester hats in $X_2$ that are lifts of orbifolds corresponding to the same $\Theta$ graph. In order for the sets of jester hats to be the same, cycles of length $2j$ labelled with $i$ and $i + 1$ must map to cycles of the same length and also labeled with $i$ and $i + 1$. As a result, $x_i = x'_i$. 

\qed 
\end{proof} 

\begin{lemma}
\label{b}
Let $\mathcal{X}''$ be the class of finite-sheeted covers of Davis orbicomplexes from Theorem \ref{main}.  Suppose $S_1, S_2 \in \mathcal{S}$ are the singular sets of two $2d$- sheeted covers $X_1, X_2 \in \mathcal{X}''$. If $x_i = x'_i$ for all $1 \leq i \leq N$, then there exists a label-preserving homeomorphism $\bar{f}: S_1 \rightarrow S_2$. 
\end{lemma}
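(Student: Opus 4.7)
The plan is to use the cycle count vectors to determine the label-preserving homeomorphism type of $S_j \in \mathcal S$ via an analysis of its planar embedded structure, exploiting the rigid cyclic label order at each non-valence-two vertex.

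Since $S_j$ covers the singular set of $\mathcal D_\Gamma$ (a star graph with $N$ distinctly labeled edges at its center), every non-valence-two vertex of $S_j$ is incident to exactly one edge of each label $1,\ldots,N$. For each $i$, the subgraph $S_j^{(i,i+1)}$ of edges labeled $i$ or $i+1$ is therefore $2$-regular at its non-valence-two vertices and decomposes, after contracting the valence-two subdivision vertices, into a disjoint union of even-length cycles alternating in the two labels. The cycle count vector $x_i$ records the multiset of these cycle lengths, so the hypothesis $x_i = x'_i$ yields a label-preserving isomorphism $S_1^{(i,i+1)} \cong S_2^{(i,i+1)}$ for every $i$.

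Next, I would prove by induction on the number of steps in Construction \ref{singularset} that every $S \in \mathcal S$ admits a planar embedding in which, at each non-valence-two vertex, the incident edges appear in cyclic order $1, 2, \ldots, N$. The base case $S'$ admits such an embedding with the $a$- and $b$-label arcs of each $\Theta$-graph between adjacent vertices forming contiguous blocks. Each splitting step is local: it replaces $j$ parallel edges between $v_i$ and $v_{i+1}$ by a diamond through $u_i, u_{i+1}$, where the $j$ diamond legs inherit the contiguous block of labels from the deleted edges and the $N-j$ middle edges carry the complementary contiguous block, propagating both planarity and the cyclic label order to $u_i, u_{i+1}$. Under this embedding, the faces of $S$ are precisely the cycles of $S^{(i,i+1)}$ for all $i$.

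Finally, I would construct $\bar f: S_1 \to S_2$ by appealing to the rigidity of a planar embedded graph with fixed cyclic label order at every vertex. The face structures of $S_1$ and $S_2$ agree, since faces are identified with $(i, i+1)$-cycles and the cycle counts coincide by hypothesis. Each non-valence-two vertex of $S_j$ is then uniquely determined by the $N$-tuple of faces incident to it in cyclic order, and matching vertices across $S_1$ and $S_2$ by this tuple extends to a label-preserving identification of edges, yielding $\bar f$. The main technical obstacle will be Step 2: ensuring the label subset $L$ chosen at each splitting step is a contiguous arc of $\{1,\ldots,N\}$ so the cyclic label order at the new vertices $u_i, u_{i+1}$ remains a rotation of $1, 2, \ldots, N$. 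A secondary subtlety is that the multiset of face boundaries does not in general determine a planar graph, but the rigid cyclic label order at every vertex collapses this ambiguity and forces the reconstruction.
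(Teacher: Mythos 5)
Your strategy (equip each $S\in\mathcal{S}$ with a planar embedding whose rotation at every vertex is $1,2,\dots,N$ and whose faces are the $(i,i+1)$-cycles, then reconstruct the graph from its face data) is genuinely different from the paper's argument, which is an induction on the degree of the cover: one locates the last vertex pair produced by Construction \ref{singularset} (the pair whose remaining edges all run to a single neighbor on each side), un-splits it in both $S_1$ and $S_2$, checks that the cycle count vectors of the smaller singular sets still agree, applies the inductive hypothesis, and finally extends the resulting label-preserving homeomorphism over the re-inserted pair. As written, though, your proposal has two genuine gaps. First, Step 2 is not established, and you flag the decisive point yourself: Construction \ref{singularset} does not specify which labels are carried by the $j$ edges removed at a splitting step, so there is no a priori reason that the label set at each split is a contiguous cyclic block, hence no a priori planar embedding with standard rotation whose faces are exactly the $(i,i+1)$-cycles. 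Note also that Assumption \ref{ass1} permits the thickened singular set to be $S_{g,h}$ with $g>0$, so genus zero for the ribbon structure whose boundary circles are the attaching cycles is something to be derived from the hypotheses, not assumed; in the paper's base case the analogous contiguity of label blocks is deduced from the equality of the cycle count vectors together with the constraints on four-vertex covers, rather than taken as given.

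Second, and more seriously, Step 3 is asserted rather than proved. Even granting the planar embedding with standard rotation and faces equal to the label cycles, a vertex is not ``uniquely determined by the $N$-tuple of faces incident to it'': these covers are highly symmetric, and distinct vertices typically carry identical incident-face data (already in the four-sheeted cover $S'$ all four non-valence-two vertices do), so your vertex matching is not well defined. Even after choosing some bijection between vertices with equal face data, you must still verify that it respects adjacency and the way the faces are glued along edges, i.e.\ that the multiset of face lengths for each label pair determines the labeled tiling of the sphere up to label-preserving homeomorphism. That is precisely the content of Lemma \ref{b}, and it does not follow from a general appeal to rigidity of planar graphs with a fixed cyclic label order at each vertex: face-size data does not determine a polygonal decomposition in general, so the ambiguity must be collapsed by an explicit reconstruction argument, for example an induction that peels off one splitting at a time, which is exactly what the paper's proof does.
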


\begin{proof} We use induction on the degree of the covers, $2d$. Note that since the cycle count vectors are the same, the covers must be of the same degree. For the base case, suppose $d = 2$, so $2d = 4$. By Assumption \ref{ass1}, each cycle in both $S_1$ and $S_2$ must be attached to at least one jester hat. Thus, cycles of odd length are not allowed since a jester hat cannot be attached to such a cycle. Thus, the only possible cycle lengths of $S_1$ and $S_2$ are 2 and 4. By construction, every edge in $S_1$ must be attached to part of the boundary of at least one jester hat, so every edge in $S_1$ must be included into at least one cycle. In order for $S_1$ to be a connected graph with four vertices satisfying the property that every edge is part of a at least one cycle, there must be at least one four-cycle in $S_1$. The same holds for $S_2$. Since three-cycles are not allowed and each edge in $S_1$ will be included in a cycle, the only possible edges in $S_1$ are of the form $[v_i, v_{i + 1}]$, where as before, $i$ is taken modulo 4. The same holds for $S_2$. Therefore, the only possible $S_1$ and $S_2$ are graphs with $k$ edges between $v_1$ and $v_2$ as well as $v_3$ and $v_4$ (where $1 \leq k < N$), and $N - k$ edges between $v_2$ and $v_3$ as well as $v_4$ and $v_1$ (namely, a cycle of $4$ subdivided $\Theta$ graphs- see $S_1$ and $S_2$ in Figure \ref{fig:Cdoubleprimeex} for examples of such graphs). Note that such graphs will have two four-cycles and $(2N - 4)$ two-cycles. As a result, the only 4-sheeted covers that satisfy Assumption \ref{ass1} of Theorem \ref{main} consist of two sets of jester hats glued to four-cycles, and the rest of the sets of jester hats glued to two-cycles.

If $x_i = x'_i$ for all $1 \leq i \leq N$, we know that for both $S_1$ and $S_2$, there is one four-cycle labeled with $j$ and $j + 1$ for some $1 \leq j \leq N$, and another four-cycle labeled with $k$ and $k + 1$ for $k \neq j$. Without loss of generality, suppose $j < k$. Note that in $S_1$, if an edge labeled $j + 1$ is between $v_i$ and $v_{i + 1}$, then an edge labeled $k$ must necessarily also be between $v_i$ and $v_{i + 1}$. Otherwise, the edge labeled with $k + 1$ must be between $v_i$ and $v_{i + 1}$, so an edge labeled with $k$ is between $v_i$ and $v_{i - 1}$. In this case, the labels on the edges between $v_i$ and $v_{i + 1}$ will range from $j + 1$ to $k + 1$, so one of the edges must be labeled with $k$. However, there is already a $k$ edge between $v_i$ and $v_{i - 1}$, which is impossible. The same argument can be used for $S_2$ if we replace $v_i$ with $v'_i$. We can thus see that in $S_1$, for all $1 \leq i \leq N$, if edges labeled with all integers between $j + 1$ and $k$ connect $v_i$ and $v_{i + 1}$ ($v'_i$ and $v'
_{i + 1}$ in $S_2$), then the edges connecting $v_i$ and $v_{i - 1}$ ($v'_i$ and $v'_{i - 1}$ in $S_2$) are labeled with integers $1 \leq l \leq N$ such that $l \leq j$ or $l \geq k + 1$. We can then construct a homeomorphism $\bar{f}: S_1 \rightarrow S_2$ where $f(v_i) = v'_i$ for all $1 \leq i \leq N$. We can easily check that edge labels and vertex adjacencies are preserved under $\bar{f}$, completing the base case. 

Suppose the lemma holds for $d$-sheeted covers. By Assumption \ref{ass2}, there exist $\{v_i$, $v_{i + 1}\} \in V(S_1)$ and $\{v'_i$, $v'_{i + 1}\} \in V(S_2)$ with the same number of edges and the same set of labels between them. Additionally, the only other edges attached to $v_i$ and $v_{i + 1} \in V(S_1)$ are also attached to $v_{i - 1}$ and $v_{i + 2}$ respectively; the same holds for $v'_i, v'_{i + 1} \in V(S_2)$.
Note that for arbitrary $d > 0$, $S_1$ and $S_2$ must both have $(2d + 2)$-cycles for the same reason the four-sheeted covers in the base case necessarily have $4$-cycles: each edge of $S_1$ and $S_2$ is necessarily attached to a jester hat, and thus by Assumption \ref{ass1} necessarily belongs to a cycle. Suppose there are no cycles in $S_1$ and $S_2$ of length $2d + 2$. Then $S_1$ and $S_2$ would be disconnected since they are graphs with $2d + 2$ vertices. Thus, there must be at least one $(2d + 2)$-cycle in $S_1$ and $S_2$, which we will label with $m$ and $m + 1$. Note that if $v_i$ and $v_{i + 1}$ have $j$ edges between them, then the other two pairs of vertices $\{v_i, v_{i - 1}\}$ and $\{v_{i + 1}, v_{i + 2}\}$ must also necessarily have $N - j$ edges between them, and the edges between the two pairs of vertices have the same set of labels. Delete $v_i$ and $v_{i + 1}$ and the edges they are adjacent to, and construct the $j$ deleted edges between $v_{i + 2}$ and $v_{i - 1}$ to create the singular set $T_1$ of a $2d$-sheeted cover of $\mathcal{D}_{\Gamma_1}$. Construct a singular set $T_2$ of $2d$-sheeted cover of $\mathcal{D}_{\Gamma_2}$ in the same way. Let $y_i$ and $y'_i$ be the new set of cycle count vectors. Note that in total, for both $2d$-sheeted covers, we have deleted and added the same set of cycles with the same set of edge labels, so $y_i = y'_i$ for all $1 \leq i \leq N$. Then by the inductive hypothesis, there exists a homeomorphism $\bar{g}: T_1 \rightarrow T_2$. 

We then can extend $\bar{g}$ to $\bar{f}: S_1 \rightarrow S_2$. Suppose $\bar{g}(v_{i + 2}) = v'_k$ for some $v'_k \in T_2$ (note that $k$ is not necessarily equal to $i$). Then $\bar{g}(v_{i - 1})$ maps to an adjacent vertex $v_l$ such that there are $j$ edges between $v'_k$ and $v'_l$ with the same labelings as the edges between $v_i$ and $v_{i + 1}$ in the original $S_1$. Construct two vertices $u_{k}$ and $u_{l}$ in $T_2$ and $v_{i - 1}$ and $v_{i + 2}$ in $T_1$, and delete the $N - j$ edges between $v_k, v_l \in V(T_2)$ and $v_{i - 1}, v_{i + 2} \in V(T_1)$ that have the same labels as the edges added to construct $T_i$ from $S_i$ for $i = 1, 2$. Then reconstruct the $j$ deleted edges between $u_k, u_l \in V(T_2)$ and $v_{i - 1}, v_{i + 2} \in V(T_1)$ as well as $N - j$ edges $[v'_k, u_k]$ and $[v'_l, u_l]$ in $T_2$ and $[v_{i - 1}, v_i]$ and $[v_{i + 1}, v_{i + 2}]$ in $T_1$. Call the new graphs $U_1$ and $U_2$, but note that $U_1$ is identical to $S_1$. Additionally, $U_2$ is homeomorphic to $S_2$ since they are the same graph up to a relabeling of vertices. Let $\bar{g}(v) = \bar{f}(v)$ for all $v \in V(S_1)$, $\bar{f}(v_i) = u_k$ and $\bar{f}(v_{i - 1}) = u_l$, which also determines the maps between the newly added edges, giving us a label-preserving homeomorphism $\bar{f}: S_1 \rightarrow S_2$.
\qed 
\end{proof}

We now have all the tools to prove Theorem \ref{main}. 

\begin{proof}[Theorem \ref{main}] It suffices to show that for $X_1, X_2 \in \mathcal{X}$, $\pi_1(X_1) \cong \pi_1(X_2)$ implies $X_1 \overset{\text{homeo}}{\cong} X_2$. As a result of Lemma \ref{cor}, in order to show $X_1 \overset{\text{homeo}} \cong X_2$, it suffices to show there exists a label-preserving graph homeomorphism between the singular sets of $X_1$ and $X_2$ respectively. Since subdivisions of a graph  belong to the same homeomorphism class, we can delete the valence two vertices from the singular sets to obtain $S_1$ and $S_2$, and compare the graphs. By Lemma \ref{a}, $x_i = x'_i$ for $1 \leq  i \leq N$, where $x_i$ and $x'_i$ are the cycle count vectors of $S_1$ and $S_2$ defined in Definition \ref{vecs}. Then by Lemma \ref{b}, we can conclude $S_1 \overset{\text{homeo}}{\cong} S_2$, as desired. 
\qed 
\end{proof}


%
%

%

%


\noindent Email: yandi.wu@wisc.edu \\
DEPARTMENT OF MATHEMATICS, UNIVERSITY OF WISCONSIN, MADISON 

\end{document}